\newtheorem{theorem}{Theorem}[section]
\newtheorem{lemma}[theorem]{Lemma}
\newtheorem{proposition}[theorem]{Proposition}
\newtheorem{problem}[theorem]{Problem}
\newtheorem{corollary}[theorem]{Corollary}
\newtheorem{definition}[theorem]{Definition}
\newtheorem{remark}[theorem]{Remark}
\newcommand{\B}{\mathcal B}
\newcommand{\N}{\mathbb N}
\newcommand{\Q}{\mathbb Q}
\newcommand{\R}{\mathbb R}
\newcommand{\C}{\mathbb C}
\newcommand{\Co}{\mathfrak c}
\newcommand{\on}{\operatorname}
\author{Artur Bartoszewicz}
\address{Institute of Mathematics, \L \'od\'z University of Technology,
W\'olcza\'nska 215, 93-005 \L \'od\'z, Poland}
\email {arturbar@p.lodz.pl}
\author{Marek Bienias}
\address{Institute of Mathematics, \L \'od\'z University of Technology,
W\'olcza\'nska 215, 93-005 \L \'od\'z, Poland}
\email {marek.bienias88@gmail.com}
\author{Ma\l gorzata Filipczak}
\address{Faculty of Mathematics and Computer Science, University of \L \'od\'z, Banacha 22, 93-008 \L \'od\'z, Poland}
\email {malfil@math.uni.lodz.pl}
\author{Szymon G\l \c ab}
\address{Institute of Mathematics, \L \'od\'z University of Technology,
W\'olcza\'nska 215, 93-005 \L \'od\'z, Poland}
\email {szymon.glab@p.lodz.pl}
\title[Strong $\Co$-algebrability of strong Sierpi\'nski-Zygmund functions]{Strong $\Co$-algebrability of strong Sierpi\'nski-Zygmund, smooth nowhere analytic and other sets of functions}
\date{}
\thanks{The first, the second and the last authors have been supported by the Polish Ministry of Science and Higher Education Grant No IP2011 014671 (2012--2014).}
\subjclass[2010]{Primary: 15A03; Secondary: 46J10, 26A15} 
\keywords{Algebrability, strong algebrability, Sierpi\'nski-Zygmund function, nowhere H\"older function, approximately continuous function, nowhere monotone function, Bruckner-Garg function, Baire hierarchy}
\date{}
\begin{document}

\begin{abstract}
We present a useful technique of proving strong $\Co$-algebrability. As an outcome we obtain the strong $\Co$-algebrability of the following sets of functions: strong Sierpi\'nski-Zygmund, nowhere H\"older, Bruckner-Garg, nowhere monotone differentiable, a certain Baire class, smooth and nowhere analytic functions.
\end{abstract}

\maketitle

\section{Introduction}

In the last $10$ years there appeared a new point of looking at the largeness of sets included in some function spaces. Let us recall the following notion, that  appeared for the first time in \cite{2} and has its origins in the works of  R.M. Aron, V.I. Gurariy, D. P\'erez-Garc\'ia, J.B. Seoane-Sep\'ulveda (see \cite{2,3,4}). Recently, there were published two surveys in this topic, that contain several examples (see \cite{s1,s2}).

\begin{definition}
Let $\kappa$ be a cardinal number.
\begin{enumerate}
\item Let $\mathcal{L}$ be a vector space and a set $A\subseteq\mathcal{L}.$ We say that $A$ is $\kappa$-lineable if $A\cup \{0\}$ contains a $\kappa$-dimensional vector space;
\item Let $\mathcal{L}$ be a Banach space and a set $A\subseteq\mathcal{L}.$ We say that $A$ is spaceable if $A\cup \{0\}$ contains an infinite dimensional closed vector space;
\item Let $\mathcal{L}$ be a linear commutative algebra and a set $A\subseteq\mathcal{L}.$ We say that $A$ is $\kappa$-algebrable if $A\cup \{0\}$ contains a $\kappa$-generated algebra $B$ (i.e. the minimal system of generators of $B$ has cardinality $\kappa$).
\end{enumerate}
\end{definition}

A. Bartoszewicz and S. G\l \c{a}b in \cite{6} went further and asked for existence of free structures inside some set $A\cup\{0\}.$ They introduced the notion of strong algebrability.

\begin{definition}{\cite{6}}
Let $\kappa$ be a cardinal number. Let $\mathcal{L}$ be a linear commutative algebra and a set $A\subseteq\mathcal{L}.$ We say that $A$ is strongly $\kappa$-algebrable if $A\cup \{0\}$ contains a $\kappa$-generated algebra $B$ that is isomorphic with a free algebra (denote by $X=\{x_\alpha: \alpha<\kappa\}$ the set of generators of this free algebra).
\end{definition}

Remark, that the set $X=\{x_\alpha: \alpha<\kappa\}$ is the generating set of some free algebra contained in $A\cup\{0\}$ if and only if the set $\tilde{X}$ of elements of the form $x_{\alpha_1}^{k_1}x_{\alpha_2}^{k_2}\cdots x_{\alpha_n}^{k_n}$ is linearly independent and all linear combinations of elements from $\tilde{X}$ are in $A\cup\{0\}$; equivalently for any $k\in \N,$ any nonzero polynomial $P$ in $k$ variables without a constant term and any distinct $y_1,...,y_k\in X,$ we have $P(y_1,...,y_k)\neq 0$ and $P(y_1,...,y_k)\in A.$ This is a very natural way of proving algebrability, and therefore many authors, while proving algebrability, get strong algebrability actually. 

It is easy to check that for any cardinal number $\kappa$, the following implications hold.
$$\kappa \text{-strong algebrability} \Rightarrow \kappa \text{-algebrability} \Rightarrow \kappa \text{-lineability.}$$
Moreover, since every infinite dimensional Banach space has a linear base of cardinality $\Co$, then
$$\text{spaceability} \Rightarrow \Co \text{-lineability}.$$

On the other hand, none of these implications can be reversed (cf.  \cite{11, BBGS}).

Research on lineability, spaceability and algebrability can be viewed as a research on smallness of sets. The notions of smallness, like a measure zero, meager and $\sigma$-porosity, are connected with measure, topology and metric structure, respectively. The set that is not lineable, spaceable or algebrable can be viewed as small, since it does not contain a large algebraic structure. It means that being nonlineable, nonspaceable or nonalgebrable is an algebraic notion of smallness. The above named notions of smallness are not only different from the algebraic ones, but they are also incomparable. The following example shows, that largeness in algebrability does not coincide with largeness in topology. In \cite{GQ} it was proved that the set of all functions from $C[0,1]$ which attain their maximum at exactly one point and attain their minimum at exactly one point is not $2$-lineable. On the other hand, this set is residual. In \cite{6} it was shown that there is a subset of $\ell_1$ which is residual, strongly $\Co$-algebrable, but not spaceable. It is well known that every proper closed infinitely dimensional subspace of a Banach space is porous, which shows that a spaceable set can be very small from topological and metric point of view.

In this paper we prove several results in strong $\Co$-algebrability. Almost all the presented below results are the best possible in the sense of complexity of the structure and cardinality of the set of generators. Let us remark, that the he $\Co$-algebrability of functions families $\mathcal F$, with $\vert\mathcal F\vert=\Co$, that is maximal algebrability, was also proved in a few papers -- \cite{11}, \cite{GKP} and \cite{GGMS}.  
%we recall an idea of M. Balcerzak, A. Bartoszewicz and M. Filipczak from \cite{BBF}, and using their criterion, 

The paper is organized as follows. In Section 2 we prove that family of all strongly Sierpi\'nski-Zygmund functions is strongly $\Co$-algebrable  provided this family is nonempty. Strongly Sierpi\'nski-Zygmund functions are classical Sierpi\'nski-Zygmund functions under the Continuum Hypothesis, they do not exist under Martin's Axiom ($\on{MA+\neg CH}$), and there are strongly Sierpi\'nski-Zygmund functions in some models of ZFC in which the Continuum Hypothesis fails. In Section 3 we prove strong $\Co$-algebrability of family of all functions $f:\R\to\R$ which set of continuity points equals to fixed $G_\delta$ set. Our result completes the knowledge on algebrability of such functions families. In Section 4 we prove strong $\Co$-algebrability of the set of approximately continuous and discontinuous almost everywhere, nowhere H\"older functions, nowhere analytic smooth functions, differentiable nowhere monotone functions and Bruckner-Garg functions. All these results improve known facts, proved by other authors. In Section 5 we present strong $\Co$-algebrability of Darboux Baire class one functions which are not approximately continuous, Baire class one functions which are not Darboux, and functions which are in Baire class $\beta$ but are nowhere Baire class $\alpha$ for $\alpha<\beta$. We prove also that the set of all sequences which set of limit points is homeomorphic to the ternary Cantor set is strongly $\Co$-algebrable. In the last section we give some final remarks and state some open questions.

It is a simple observation that the set $\{x\mapsto\exp(rx):r\in\R\}$ is linearly independent family of functions in $\R^\R$. Moreover, if the set $\{r_\alpha:\alpha<\Co\}$ is linearly independent over $\Q$, then $\{x\mapsto\exp(r_\alpha x):\alpha<\Co\}$ is the set of free generators. Such families of exponential functions were found useful by several authors in solving lineability problems. They used the following strategy. Take one function $F$ from considered family $\mathcal{A}$ and prove that each linear combination $\sum_{i=1}^ka_i\exp(r_ix)F(x)$ is still in $\mathcal{A}$. Then $\mathcal{A}$ is $\Co$-lineable. L. Bernal-Gonz\'alez and M. Cabrera in \cite{BG2}, using this idea, proved that subset of functions in $\on{CBV}[0,1],$ which are absolutely continuous on no subinterval of $[0, 1],$ is maximal dense-lineable. P. Jim\'enez-Rodr\'{\i}guez, G. Mu\~noz-Fern\'andez and J. Seoane-Sep\'ulveda in \cite{JMS} proved that the set of differentiable functions $f:\R\to\R^2$ that do not enjoy the Mean Value Theorem is $\Co$-lineable.  J. G\'amez-Merino, G. Mu\~noz-Fern\'andez, V. S\'anchez, J. Seoane-Sep\'ulveda in \cite{8} proved that the set of bounded approximately continuous mappings defined on $\R$, that are discontinuous almost everywhere, is $\Co$-lineable. 

In \cite{GGMS} the authors used composition instead of multiplication to prove $\mathfrak c$-algebrability of the set of continuous functions such that both of their sets of proper local minima and maxima are dense in $\R$. Recently this idea has been continued in \cite{BBF}, where was proposed the following setting.

\begin{definition}\cite{BBF}
We say that a function $f:\R\to\R$ is \emph{exponential-like} (of rank $m$) whenever $f$ is given by $f(x)=\sum_{i=1}^m a_i e^{\beta _i x}$ 
for some distinct nonzero real numbers $\beta_1,...,\beta_m$ and some nonzero real numbers $a_1,...,a_m$.
We will also consider exponential-like functions (of the same form) with the domain $[0,1]$.
\end{definition}

In \cite{BBF} the authors proved a simple, but very useful property of exponential-like functions.

\begin{lemma}\label{expLem}
For every positive integer $m$, any exponential-like function $f: \R\to \R$  of rank $m$, and
each $c\in\R$, the preimage $f^{-1}[\{c\}]$ has at most $m$ elements. Consequently, $f$ is not constant in
every subinterval of $\R$. In particular there exists a finite decomposition of $\R$ into intervals, such that $f$ is strictly monotone on each of them.
\end{lemma}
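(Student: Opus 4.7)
The plan is to prove a slightly stronger auxiliary statement by induction on the rank, namely: any function of the form $g(x)=\sum_{i=1}^{n}a_i e^{\gamma_i x}$ with pairwise distinct real $\gamma_i$ (allowed to include $0$) and nonzero coefficients $a_i$ has at most $n-1$ real zeros. The bound claimed in the lemma follows by applying this to $f(x)-c$: when $c=0$ we obtain at most $m-1<m$ solutions; when $c\neq 0$ we regard $-c$ as the coefficient of $e^{0\cdot x}$ and, since all $\beta_i$ are nonzero, $f(x)-c$ is a combination of $m+1$ distinct exponentials, hence has at most $m$ zeros.

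For the induction, the base case $n=1$ is immediate, since $a_1 e^{\gamma_1 x}$ never vanishes. For the inductive step, I would multiply $g$ by the nowhere-zero function $e^{-\gamma_1 x}$, producing
\[
h(x)=a_1+\sum_{i=2}^{n}a_i e^{(\gamma_i-\gamma_1)x},
\]
which has exactly the same zeros as $g$. Differentiating kills the constant term and yields
\[
h'(x)=\sum_{i=2}^{n}a_i(\gamma_i-\gamma_1)e^{(\gamma_i-\gamma_1)x},
\]
a combination of $n-1$ exponentials whose exponents $\gamma_i-\gamma_1$ are pairwise distinct and nonzero and whose coefficients $a_i(\gamma_i-\gamma_1)$ are nonzero. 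The inductive hypothesis then gives at most $n-2$ zeros for $h'$, and Rolle's theorem upgrades this to at most $n-1$ zeros for $h$, completing the induction.

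The remaining two assertions follow effortlessly from the first. If $f$ were constant on some subinterval, then $f^{-1}[\{c\}]$ would be infinite for that value $c$, contradicting the bound just established. For the monotone decomposition, observe that
\[
f'(x)=\sum_{i=1}^{m}a_i\beta_i e^{\beta_i x}
\]
is itself exponential-like of rank $m$ (the $\beta_i$ are distinct and nonzero, and so are the $a_i\beta_i$), so by the first part $f'$ has at most $m$ real zeros. These zeros partition $\R$ into finitely many open intervals on each of which $f'$ is continuous and nonvanishing, hence of constant sign; consequently $f$ is strictly monotone on the closure of each piece.

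I do not anticipate a real obstacle here: the only subtle point is making sure the reduction after differentiation stays within the inductive hypothesis, which is precisely why one should allow a zero exponent in the auxiliary statement before specializing to the setting of the lemma. The multiply-by-$e^{-\gamma_1 x}$ trick combined with Rolle's theorem is the engine, and everything else is bookkeeping.
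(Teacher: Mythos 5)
Your argument is correct and complete: the induction on the number of exponential terms (multiplying by $e^{-\gamma_1 x}$, differentiating to kill the constant, and invoking Rolle's theorem), together with the reduction of $f^{-1}[\{c\}]$ to the zero set of a sum of $m+1$ distinct exponentials, is the standard proof of this zero-counting bound, and the deductions of non-constancy and of the piecewise strict monotonicity from the finiteness of the zero set of $f'$ are sound. Note that the paper itself does not prove Lemma \ref{expLem} but simply quotes it from \cite{BBF}, so there is no in-paper proof to compare against; your write-up supplies essentially the argument one would expect to find there.
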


The criterion described in \cite{BBF} is the following.

\begin{theorem}\label{method BBF}
Let $\mathcal{F}\subseteq {\R}^{[0,1]}$ and assume that there exists a function $F\in\mathcal{F}$ such that
$f\circ F\in\mathcal{F}\setminus\{ 0\}$ for every exponential-like
function $f:\R\to\R$. Then $\mathcal{F}$ is strongly $\Co$-algebrable. More exactly,
if $H\subseteq \R$ is a set of cardinality $\Co$ and linearly independent over the rationals $\Q$,
then $\exp\circ\,(rF)$, $r\in H$, are free generators of an algebra contained in $\mathcal{F}\cup\{ 0\}$.
\end{theorem}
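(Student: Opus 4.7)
The plan is to fix a set $H = \{r_\alpha : \alpha < \Co\} \subseteq \R$ of cardinality $\Co$ that is linearly independent over $\Q$ (such a set exists by a standard Hamel basis argument), and to verify that the family $\{g_\alpha : \alpha < \Co\}$ with $g_\alpha := \exp\circ(r_\alpha F)$ is a set of free generators of an algebra contained in $\mathcal{F}\cup\{0\}$. By the remark following the strong algebrability definition, this reduces to showing that for every $k\in\N$, every nonzero polynomial $P$ in $k$ variables without a constant term, and every choice of distinct $r_1,\dots,r_k\in H$, the function $P(g_{r_1},\dots,g_{r_k})$ lies in $\mathcal{F}\setminus\{0\}$.

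The key computation is to expand $P(y_1,\dots,y_k) = \sum_\alpha c_\alpha y_1^{\alpha_1}\cdots y_k^{\alpha_k}$ over a finite set of multi-indices $\alpha = (\alpha_1,\dots,\alpha_k)\in\N^k\setminus\{0\}$ with $c_\alpha\neq 0$, and to substitute $y_i = g_{r_i}(x) = e^{r_i F(x)}$. This yields
$$P(g_{r_1}(x),\dots,g_{r_k}(x)) \;=\; \sum_\alpha c_\alpha\, e^{\beta_\alpha F(x)} \;=\; f(F(x)),$$
where $\beta_\alpha := \alpha_1 r_1 + \cdots + \alpha_k r_k$ and $f(t) := \sum_\alpha c_\alpha e^{\beta_\alpha t}$. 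The $\Q$-linear independence of $r_1,\dots,r_k$ ensures that the exponents $\beta_\alpha$ are pairwise distinct as $\alpha$ varies over the support of $P$, and that each $\beta_\alpha$ is nonzero because $\alpha\neq 0$. Consequently $f$ is exponential-like (its rank equals the size of the support of $P$), and the hypothesis on $F$ delivers $f\circ F\in \mathcal{F}\setminus\{0\}$, which is precisely what is needed.

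The argument is short and the main thing to be careful about is verifying that $f$ is genuinely exponential-like after substitution, i.e., that the exponents do not accidentally coincide and the coefficients do not collapse. Both potential pitfalls are eliminated by the $\Q$-linear independence of $H$: distinct multi-indices $\alpha$ give distinct nonzero exponents $\beta_\alpha$, so each coefficient of $e^{\beta_\alpha t}$ in $f$ equals the corresponding nonzero $c_\alpha$ from the polynomial. The nonzeroness half of the conclusion, which secures algebraic freeness (distinct polynomials produce distinct, in particular nonzero, combinations of the generators), then comes for free, since it is built directly into the hypothesis $f\circ F \in \mathcal{F}\setminus\{0\}$.
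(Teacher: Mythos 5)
Your proof is correct and follows essentially the same route as the paper: expand the polynomial, substitute the generators, use $\Q$-linear independence of the exponents to see the resulting function of $F$ is exponential-like, and invoke the hypothesis on $F$. Nothing is missing.
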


For the reader's convenience we recall the simple proof of this fact.

\begin{proof}
Consider a set $H$ of cardinality $\Co$, such that it is a linearly independent over $\Q.$
By the assumption we have that $\{\exp\circ\,(rF):r\in H\}\subseteq\mathcal{F}.$ 
To show that it is a set of free generators, consider $n\in\N$ and a non-zero polynomial $P$ in $n$ variables without a constant term.
The function given by $[0,1] \ni x\mapsto P(e^{r_1 F(x)},e^{r_2 F(x)},...,e^{r_n F(x)})$
is of the form

$$\sum_{i=1}^m a_i \left(e^{r_1 F(x)}\right)^{k_{i1}} \left(e^{r_2 F(x)}\right)^{k_{i2}}...\left(e^{r_n F(x)}\right)^{k_{in}}
=\sum_{i=1}^m a_i \exp\left(F(x)\sum_{j=1}^n r_j k_{ij}\right)$$
where $a_1,..., a_m$ are nonzero real numbers and the matrix $[k_{ij}]_{i\le m,j\le n}$ of nonnegative integers has distinct nonzero rows.

Since the function $t\mapsto\sum_{i=1}^m a_i \exp(t\sum_{j=1}^n r_j k_{ij})$ is exponential-like, the function $[0,1] \ni x\mapsto P(e^{r_1 F(x)},e^{r_2 F(x)},...,e^{r_n F(x)})$
is in $\mathcal{F}\setminus\{ 0\}$.
\end{proof}
In all proofs we make use of Theorem \ref{method BBF}.

\section{Strongly Sierpi\'nski-Zygmund functions}

In this section we will consider a type of functions that firstly appeared in 1920's (see \cite{SZ}). The classical Luzin's Theorem implies that for every measurable function $f:\R\to\R$ there is a set $S\subseteq \R$ with infinite measure such that $f|_S$ is continuous. In 1922 (see \cite{Blu}) H. Blumberg showed that if we omit the assumption that $f:\R\to \R$ is measurable, then some weaker version of the assertion of the Luzin's Theorem remains true. More precisely H. Blumberg proved the following.  
\begin{theorem}\cite{Blu}
Let $f:\R\to\R$ be an arbitrary function, then there exists a dense subset $S\subseteq\R$ such that $f|_S$ is continuous.
\end{theorem}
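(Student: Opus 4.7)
The plan is a Baire-category tree construction. The fundamental ingredient will be the following lemma: for every $f:\R\to\R$, every nonempty open $U\subseteq\R$, and every $\epsilon>0$, there exist a nonempty open $V\subseteq U$ and a real $c$ with $V\cap f^{-1}((c-\epsilon,c+\epsilon))$ dense in $V$. To prove it I would partition $\R$ into countably many intervals of length $<2\epsilon$; their $f$-preimages cover the Baire space $U$, so by the Baire Category Theorem at least one is somewhere dense in $U$, providing the desired $V$ (interior of its closure in $U$) and $c$ (midpoint of the corresponding interval).

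Iterating the lemma along a tree indexed by $\sigma\in\N^{<\omega}$, I would build pairs $(V_\sigma,c_\sigma)$ satisfying: (i) $\{V_{\sigma^\frown k}\}_{k\in\N}$ is a pairwise disjoint family of nonempty open subsets of $V_\sigma$ with dense union in $V_\sigma$; (ii) $V_\sigma\cap f^{-1}((c_\sigma-2^{-|\sigma|},c_\sigma+2^{-|\sigma|}))$ is dense in $V_\sigma$; and (iii) $|c_{\sigma^\frown k}-c_\sigma|<2^{-|\sigma|}$, so that along every branch the $c$-values form a Cauchy sequence. Condition (iii) is the delicate one: at the refinement step I would partition the previous box around $c_\sigma$ into \emph{finitely} many subintervals of length $<2\cdot 2^{-(|\sigma|+1)}$; because this yields a finite cover of $V_\sigma$ by preimage sets whose union is dense in $V_\sigma$, the Baire Category Theorem still produces one subinterval whose preimage is somewhere dense in $V_\sigma$, furnishing the next-level value. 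I would also arrange that the closures $\overline{V_\sigma}$ are compact, nest strictly, and have diameters tending to $0$.

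To extract $S$, I would pick $x_\sigma\in V_\sigma\cap f^{-1}((c_\sigma-2^{-|\sigma|},c_\sigma+2^{-|\sigma|}))$ for each $\sigma$ of positive length and set $S=\{x_\sigma\}$. Density of $S$ is immediate from (i). For continuity of $f|_S$ at $y=x_\tau$, the estimate
\[
|f(x_\rho)-f(y)|\le |f(x_\rho)-c_\rho|+|c_\rho-c_\tau|+|c_\tau-f(y)|
\]
for $\rho$ extending $\tau$, together with (ii) and (iii), bounds the oscillation of $f|_S$ inside $V_\tau$ by a quantity of order $2^{-|\tau|}$.

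The hard part will be converting this fixed-level bound into a genuine continuity statement: for $y$ chosen at the single level $|\tau|$, the bound $O(2^{-|\tau|})$ cannot be improved by shrinking the neighborhood unless $y$ lies in arbitrarily deep tree open sets. To force this, I plan to choose each $x_\sigma$ first and then construct the children of $V_\sigma$ so that one of them contains $x_\sigma$ as an interior point, thereby threading $y$ through an infinite descending chain $V_\tau\supsetneq V_{\tau^\frown k_1}\supsetneq V_{\tau^\frown k_1 k_2}\supsetneq\cdots$ of deeper tree sets. Doing so requires a further refinement of the lemma where the produced open set is prescribed to contain a given point of the dense piece; this is achieved by restricting the Baire argument to a shrinking neighborhood of $x_\sigma$ inside $V_\sigma$, exploiting the density of the relevant preimage near $x_\sigma$. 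With this threading, using $V_{\tau'}$ for $\tau'$ deep along the chosen chain as the neighborhood of $y$ forces the oscillation of $f|_S$ at $y$ down to $O(2^{-|\tau'|})\to 0$, completing the argument.
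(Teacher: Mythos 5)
The paper does not actually prove this statement; it only cites Blumberg's 1922 paper and remarks that the set $S$ there is countable. So your proposal has to stand on its own. Its Baire-category skeleton is sound and is indeed how modern proofs of Blumberg's theorem begin: the lemma producing, inside every nonempty open $U$, a nonempty open $V\subseteq U$ and a value $c$ with $f^{-1}((c-\epsilon,c+\epsilon))$ dense in $V$ is correct; the tree $(V_\sigma,c_\sigma)$ with conditions (i)--(iii) can be built (for (iii) a finite union of nowhere dense sets is already nowhere dense, so Baire is not even needed there); and the resulting estimate gives oscillation $O(2^{-|\tau|})$ for $f|_S$ at $x_\tau$. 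You also correctly identify the real difficulty: a point chosen at a fixed level only inherits a fixed positive oscillation bound.

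The device you propose to close that gap, however, does not work as described, and this is a genuine gap rather than a detail. The ``further refinement of the lemma where the produced open set is prescribed to contain a given point of the dense piece'' is false for an arbitrary point $x_\sigma\in V_\sigma\cap f^{-1}(I_\sigma)$. Restricting the Baire argument to a shrinking neighbourhood of $x_\sigma$ produces some nonempty open $W$ inside that neighbourhood in which the preimage of some subinterval $J\subseteq I_\sigma$ is dense, but nothing forces $x_\sigma\in W$, and nothing forces $f(x_\sigma)\in J$: the subinterval of $I_\sigma$ actually containing $f(x_\sigma)$ may have nowhere dense preimage, in which case no admissible child through $x_\sigma$ with centre near $f(x_\sigma)$ exists at all. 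And the centres along the chain through $x_\tau$ must converge to $f(x_\tau)$ --- otherwise the term $|c_{\tau'}-f(y)|$ in your triangle inequality does not tend to $0$ and you recover only the fixed bound $O(2^{-|\tau|})$. What the threading really requires is that every selected point $x$ satisfy: for each $\epsilon>0$ the set $f^{-1}((f(x)-\epsilon,f(x)+\epsilon))$ is dense in some neighbourhood of $x$. Establishing that such points can be found inside each of the sets $V_\sigma\cap f^{-1}(I_\sigma)$ --- which are dense in $V_\sigma$ but may well be meager, even countable, so that a plain ``comeager meets dense'' argument fails --- is precisely the nontrivial content of Blumberg's proof; one shows the bad points lie in the meager set $\bigcup_{n,k}\bigl(f^{-1}(J_{n,k})\setminus\on{int}\overline{f^{-1}(J_{n,k})}\bigr)$ and then organizes the selection so that this can actually be exploited against the dense-but-thin preimages. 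As written, your final paragraph asserts exactly the statement that still needs to be proved.
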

In the proof the set $S$ was countable, so one can ask if the set $S$ can be uncountable. The partial answer to this was given by W. Sierpi\'nski and A. Zygmund in \cite{SZ}.
\begin{theorem}\label{SZTh}
There exists a function $f:\R\to\R$, such that for any set $Z\subseteq\R$ of cardinality $\Co$, the restriction $f|_Z$ is not a Borel map.
\end{theorem}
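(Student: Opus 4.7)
My plan is to reproduce the classical Sierpi\'nski--Zygmund diagonal construction. First I enumerate all Borel functions $g:\R\to\R$ as $\{g_\alpha:\alpha<\Co\}$, which is possible since there are only $\Co$ Borel subsets of $\R^2$, hence at most $\Co$ Borel graphs. Simultaneously I well-order the line as $\R=\{x_\alpha:\alpha<\Co\}$, and define $f$ by transfinite recursion: at stage $\alpha<\Co$ pick any value
\[
f(x_\alpha)\in\R\setminus\{g_\beta(x_\alpha):\beta\le\alpha\},
\]
which exists because the set on the right has cardinality at most $|\alpha|+1<\Co$. By construction, for every $\beta<\Co$ the coincidence set $\{x\in\R:f(x)=g_\beta(x)\}$ is contained in $\{x_\alpha:\alpha<\beta\}$, and so has cardinality strictly less than $\Co$.

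To finish, suppose for contradiction that some $Z\subseteq\R$ with $|Z|=\Co$ has $h:=f|_Z$ Borel with respect to the subspace Borel structure on $Z$. By a standard Kuratowski-type extension theorem, $h$ lifts to a Borel function $\tilde g:\R\to\R$ with $\tilde g|_Z=h$; this $\tilde g$ must coincide with $g_\beta$ for some $\beta<\Co$. Then $f$ and $g_\beta$ agree on all of $Z$, which by the previous paragraph forces $Z\subseteq\{x_\alpha:\alpha<\beta\}$, and so $|Z|<\Co$, contradicting $|Z|=\Co$.

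The whole argument is driven by transfinite recursion and cardinal bookkeeping. The only semi-nontrivial ingredient is the extension lemma for Borel functions defined on an arbitrary (possibly non-Borel) subspace of $\R$, which one can obtain by choosing Borel sets $C_q\subseteq\R$ with $\{x\in Z:h(x)<q\}=C_q\cap Z$ for each $q\in\Q$ and setting $\tilde g$ to be a suitable Borel selection built from the $C_q$. This is classical and I would simply cite it in the write-up rather than reproduce its proof.
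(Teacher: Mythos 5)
Your proposal is correct: the paper itself gives no proof of this theorem (it is quoted from Sierpi\'nski--Zygmund \cite{SZ}), and your transfinite diagonalization against an enumeration of the $\Co$ many Borel functions, combined with the Kuratowski-type extension of a relatively Borel map on an arbitrary $Z\subseteq\R$ to a Borel map on $\R$, is exactly the standard argument. The cardinality bookkeeping and the extension step are both sound as sketched.
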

Obviously, under the Continuum Hypothesis the restriction of this function to any uncountable set cannot be continuous. J. Shinoda (see \cite{Sh}) proved that under Martin's Axiom and negation of the Continuum Hypothesis for every function $f:\R\to\R$ there exists an uncountable set $Z\subseteq \R$, such that $f|_Z$ is continuous. By classical theorems of Luzin and Nikodym, a function from Theorem \ref{SZTh} is nonmeasurable and does not have Baire property. Let us recall the following notion.
\begin{definition}
We say that a function $f:\R\to\R$ is a Sierpi\'nski-Zygmund function, if for every set $A\subseteq \R$ of cardinality $\Co,$ the restriction $f|_A$ is not a Borel map.
\end{definition}

The set of all Sierpi\'nski-Zygmund functions $f:\R\to\R$ (let us denote it by $\mathcal{SZ}(\R)$) was firstly considered, in the context of algebrability, by  J.L. G\'amez-Merino, G.A. Munoz-Fern\'andez, V.M. S\'anchez and J.B. Seoane-Sep\'ulveda in \cite{8}. The authors proved that this set is $\Co^+$-lineable and, also, $\Co$-algebrable. 
A. Bartoszewicz, S. G\l\c{a}b, D. Pellegrino and J.B. Seoane-Sep\'ulveda in \cite{7} showed the following.

\begin{theorem}
The set $\mathcal{SZ}(\R)$ is strongly $\kappa$-algebrable, provided there exists an almost disjoint family in $\Co$ of cardinality $\kappa,$ (i.e. there is a family $\{A_\alpha: \alpha<\kappa\}\subseteq [\Co]^\Co$ with $|A_\alpha \cap A_\beta|<\Co$ for any $\alpha\neq \beta$).
\end{theorem}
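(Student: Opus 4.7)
The plan is a simultaneous transfinite construction of length $\Co$ for all $\kappa$ generators $\{f_\alpha : \alpha < \kappa\}$, with the almost disjoint family $\{A_\alpha : \alpha < \kappa\}$ supplying the combinatorial scaffolding. First I would enumerate $\R = \{x_\xi : \xi < \Co\}$, enumerate the Borel partial functions $g : B \to \R$ with $B \subseteq \R$ Borel as $\{g_\eta : \eta < \Co\}$ (there are exactly $\Co$ such functions up to Borel coding), and enumerate the nonzero polynomials $\{P_m : m < \omega\} \subseteq \Q[y_1, y_2, \ldots]$ without constant term. For each $\alpha < \kappa$ I would fix a bijection $\phi_\alpha : \Co \to A_\alpha$, equipping every generator with its own enumeration of ``active'' stages, and I would fix an algebraically independent family $\{t_\gamma : \gamma < \Co\}$ in $\R$ over $\Q$ from which all values will be drawn.

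At stage $\xi$ of the recursion one defines $f_\alpha(x_\xi) \in \R$ for every $\alpha < \kappa$. The driving constraint is: for each polynomial $P_m$, each finite tuple of distinct indices $\alpha_1, \ldots, \alpha_n$ deemed relevant at $\xi$, and each Borel index $\eta \leq \xi$, the chosen values must satisfy $P_m(f_{\alpha_1}(x_\xi), \ldots, f_{\alpha_n}(x_\xi)) \neq g_\eta(x_\xi)$. Because each such equation forbids at most $\deg P_m$ values of any single argument and because almost disjointness restricts the number of relevant tuples at stage $\xi$ to $<\Co$, the total pool of bad values is of cardinality $<\Co$, and a valid selection from the continuum-many candidates $\{t_\gamma\}$ always exists.

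Verification of strong $\kappa$-algebrability then reduces to showing that, for any distinct $\alpha_1, \ldots, \alpha_n$, any nonzero polynomial $P$ without constant term, and any Borel $g = g_{\eta_0}$, the coincidence set $\{\xi : P(f_{\alpha_1}(x_\xi), \ldots, f_{\alpha_n}(x_\xi)) = g(x_\xi)\}$ has cardinality $<\Co$. By construction, this holds for every stage $\xi \geq \eta_0$ at which the tuple $(\alpha_1, \ldots, \alpha_n)$ is active, and almost disjointness confines the remaining stages to a set of size $<\Co$; consequently $P(f_{\alpha_1}, \ldots, f_{\alpha_n})|_A$ is not Borel on any $A \subseteq \R$ of size $\Co$.

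The hard part is arranging the recursion so that the per-stage workload stays within the $<\Co$ budget even when $\kappa$ can be as large as $2^\Co$: one cannot afford a fresh ``active'' choice for each generator at each stage. Almost disjointness is precisely the combinatorial lever here, because the pairwise intersections $|A_\alpha \cap A_\beta|<\Co$ ensure that at a typical $\xi$ only one generator's young phase is triggered, so the polynomial combinations reduce essentially to single-variable polynomials in the one free parameter being chosen. A further subtlety I would need to handle is that polynomials vanishing when certain variables are set to zero (for instance $P = y_1 y_2$) would destroy the SZ property if any ``default'' value of $f_\alpha$ were taken to be $0$; I would therefore ensure every $f_\alpha(x_\xi)$ is drawn from the algebraically independent family, using the almost disjoint structure only to sequence the constraints rather than to assign trivial defaults.
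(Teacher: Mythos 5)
This theorem is quoted in the paper from reference \cite{7}; no proof of it appears in the text, so there is no in-paper argument to set yours against and I can only assess the proposal on its own terms. Its overall shape --- transfinite diagonalization against an enumeration of the Borel functions, with the almost disjoint family controlling the bookkeeping when $\kappa>\Co$ --- is the right one, but as written the argument has gaps that are not merely cosmetic.

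The most serious gap is the direction in which almost disjointness enters your verification. You argue that the coincidence set $\{\xi: P(f_{\alpha_1}(x_\xi),\dots,f_{\alpha_n}(x_\xi))=g(x_\xi)\}$ is small because the inequality is enforced at every stage $\xi\ge\eta_0$ at which the tuple is ``active,'' with almost disjointness confining the remaining stages to a set of size $<\Co$. But if each generator $f_\alpha$ is individually constrained only on its own schedule $A_\alpha$, then a tuple with $n\ge 2$ is jointly constrained only on $A_{\alpha_1}\cap\dots\cap A_{\alpha_n}$, and almost disjointness makes that intersection \emph{small}, not co-small: you guarantee the inequality on a set of stages whose complement still has size $\Co$, whereas the Sierpi\'nski--Zygmund property requires the coincidence set with \emph{every} Borel $g$ to have size $<\Co$ (a coincidence set of size $\Co$ is itself a witness that the polynomial image is not Sierpi\'nski--Zygmund). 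Relatedly, the claim that almost disjointness keeps the per-stage workload below $\Co$ is unjustified: the vertical sections $\{\alpha:\xi\in A_\alpha\}$ of an almost disjoint family can have size $\kappa>\Co$, so one cannot make a fresh choice for each ``active'' generator at each stage. The usual repair is to assign at each stage a family of $\Co$ many suitably generic ``slot'' values and let $f_\alpha(x_\xi)$ read off the slot indexed through $A_\alpha$; almost disjointness then guarantees that two fixed generators occupy the same slot at only $<\Co$ many stages, so at co-$<\Co$ many stages the tuple's values are distinct members of a jointly generic set and avoidance holds with no per-tuple bookkeeping. A second gap: you diagonalize only against the countably many polynomials in $\Q[y_1,y_2,\dots]$, but strong algebrability (freeness over $\R$) requires every nonzero real-coefficient polynomial without constant term to yield a Sierpi\'nski--Zygmund function, and there are $\Co$ many such polynomials; they cannot be handled by forbidding finitely many values per equation, since that already exhausts the $<\Co$ budget at a single stage. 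An extra idea is needed, e.g.\ choosing the slot values algebraically independent over the field generated by the relevant values $g_\eta(x_\zeta)$ and converting a putative identity $\sum_i a_iM_i$ $=g$ on a set of size $\Co$ (with unknown real coefficients $a_i$) into the vanishing of an integer-coefficient determinant built from $m+1$ points, which that genericity forbids. Your closing remark about zero defaults correctly flags the degenerate-specialization problem, but the proposed fix --- drawing all values from one $\Q$-algebraically independent family --- neither addresses the real-coefficient issue nor is compatible with $\kappa>\Co$ generators each receiving individually chosen values.
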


It is known that in ZFC there is an almost disjoint family in $\Co$ of cardinality greater than $\Co.$ So it was the first time, when the strong algebrability was at the level greater than $\Co.$ On the other hand, J. G\'{a}mez-Merino, J. Seoane-Sep\'{u}lveda in \cite{GamSep} showed that there is a model of ZFC in which there is no almost disjoint family in $\Co$ of cardinality $2^\Co.$

Observe here the following property.
\begin{remark}\cite{GamSep}
Any additive group $\mathcal{A}\subseteq\mathcal{SZ}(\R)$ generates an almost disjoint family in the plane $\R\times\R$ (by considering graphs of $f\in\mathcal{A}$ as a members of this family).
\end{remark}

Hence, the result of A. Bartoszewicz, S. G\l\c{a}b, D. Pellegrino and J.B. Seoane-Sep\'ulveda is the best possible in ZFC.

Let us introduce the following notion.

\begin{definition}
We say that a function $f:\R\to\R$ is a strong Sierpi\'nski-Zygmund function, if for every set $A\subseteq \R$ of cardinality $\omega_1$ the restriction $f|_A$ is not a Borel map. Let us denote by $s\mathcal{SZ}(\R)$ the set of all strong Sierpi\'nski-Zygmund functions.
\end{definition}

Under the negation of the Continuum Hypothesis, being a strong Sierpi\'nski-Zygmund is stronger requirement than being Sierpi\'nski-Zygmund function.

Due to Theorem \ref{method BBF} we may obtain the following.

\begin{theorem}\label{sSZ}
If $s\mathcal{SZ}(\R)\neq \emptyset$, then it is strongly $\Co$-algebrable.
\end{theorem}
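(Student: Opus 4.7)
The plan is to invoke Theorem \ref{method BBF} in the obvious analogue for $\R^\R$ in place of $\R^{[0,1]}$ — the proof of that theorem transfers verbatim, since it uses nothing special about the domain. As the base function I take any $F\in s\mathcal{SZ}(\R)$, which exists by hypothesis. What remains is to verify that, for every exponential-like $f\colon\R\to\R$, the composition $f\circ F$ lies in $s\mathcal{SZ}(\R)\setminus\{0\}$.

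That $f\circ F\not\equiv 0$ is a short observation based on Lemma \ref{expLem}: since $f^{-1}[\{0\}]$ is finite, $f\circ F\equiv 0$ would force $F$ to take only finitely many values, and then some level set of $F$ would have cardinality $\omega_1$ on which $F$ is constant, hence Borel — contradicting $F\in s\mathcal{SZ}(\R)$.

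The substantive step is to show that $(f\circ F)|_A$ is not Borel for any $A\subseteq\R$ with $|A|=\omega_1$. Using Lemma \ref{expLem} I decompose $\R=I_1\cup\dots\cup I_n$ into finitely many intervals on which $f$ is strictly monotone and continuous, and hence a homeomorphism onto $f(I_j)$ with continuous (and so Borel) inverse $(f|_{I_j})^{-1}$. Setting $A_j:=A\cap F^{-1}[I_j]$, regularity of $\omega_1$ yields some $j_0\le n$ with $|A_{j_0}|=\omega_1$. On $A_{j_0}$ one has the identity
$$F|_{A_{j_0}}=(f|_{I_{j_0}})^{-1}\circ(f\circ F)|_{A_{j_0}}.$$
After extending $(f|_{I_{j_0}})^{-1}$ arbitrarily to a Borel map $\R\to\R$, any Borel extension of $(f\circ F)|_A$ would compose with it to produce a Borel extension of $F|_{A_{j_0}}$, contradicting the choice of $F$.

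No step is genuinely difficult. The only mild subtlety is that the argument uses the regularity of $\omega_1$ in an essential way to extract a single $A_{j_0}$ of full cardinality from the finite decomposition; this is exactly the feature that makes the exponential-composition strategy compatible with the definition of $s\mathcal{SZ}(\R)$, and gives strong $\Co$-algebrability at no extra cost beyond having one such function.
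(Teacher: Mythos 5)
Your proposal is correct and follows essentially the same route as the paper: apply Theorem \ref{method BBF} with a base function $F\in s\mathcal{SZ}(\R)$, use Lemma \ref{expLem} to split $\R$ into finitely many intervals on which $f$ is strictly monotone, pigeonhole to find a piece meeting the bad set in $\omega_1$ points, and invert $f$ there to exhibit $F$ restricted to an $\omega_1$-sized set as (the restriction of) a Borel map. The only cosmetic difference is that you pigeonhole on $A\cap F^{-1}[I_j]$ in the domain, whereas the paper first shows $|F(B)|=\omega_1$ and pigeonholes on the image; both yield the same contradiction.
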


\begin{proof}
Let $F\in s\mathcal{SZ}(\R)$ that means if $A\subseteq \R$, $g:\R\to\R$ is Borel and $f\vert_A=g\vert_A$, then $\vert A\vert<\omega_1$.
Let $f:\R\to\R$ be an exponential-like function. We will prove that $f\circ F$ is a strong Sierpi\'nski-Zygmund. Suppose, on the contrary, it is not strong Sierpi\'nski-Zygmund. Then there exists a set $B\subseteq\R$ of cardinality $\omega_1$ and a Borel mapping $g:\R\to\R$ such that $f\circ F|_B=g|_B.$
Notice that $|F(B)|=\omega_1.$ Indeed, suppose that $|F(B)|\leq\omega$, then by the Pigeonhole Principle there is $y \in \R$ such that $|F^{-1}(\{y\})|=\omega_1$. But then $F|_{F^{-1}(\{y\})}$ is constant, hence Borel on the set $F^{-1}(\{y\})$ of cardinality $\omega_1$ that yields us to a contradiction (since $F$ is strong Sierpi\'nski-Zygmund).\\
Moreover, by Lemma \ref{expLem}, there exists (again by the Pigeonhole Principle) an interval $(\alpha,\beta)$ such that
\begin{itemize}
\item[(1)] $f|_{(\alpha,\beta)}$ is strictly monotone, and
\item[(2)] $|(\alpha,\beta)\cap F(B)|=\omega_1.$
\end{itemize}
Let $B_1=\{t\in B: F(t)\in (\alpha,\beta)\}.$ By (2) we have $|B_1|=\omega_1$. By (1) the function $f$ is invertible on $(\alpha,\beta)$ and therefore
$$F(t)=f^{-1}\circ f \circ F(t) = f^{-1} \circ g(t)\text{, for }t\in B_1.$$ 
We reach a contradiction, since $ f^{-1} \circ g$ is Borel. 
\end{proof}

One can ask, if the assumption of the above Theorem can be fulfilled under the negation of the Continuum Hypothesis.  Gruenhage proved that if model $V$ of ZFC satisfies $2^\omega=\kappa$ and $\lambda\geq\kappa$ is a cardinal with $\lambda^\omega=\lambda$, then, after adding $\lambda$ Cohen or random reals to $V$, there exists $s\mathcal{SZ}$ function in the extension (for details see \cite{Rec}). 

\section{Functions that are continuous on a fixed $G_\delta$ set}

It is well known that any real function is continuous on a $G_\delta$ set. Several authors considered lineability and coneability of families of functions with prescribed sets of discontinuity points. F. Garc\'ia-Pacheco, N. Palmberg and J. Seoane-Sep\'ulveda in \cite{11} showed $\omega$-lineability of the set of functions with finitely many points of continuity. 
A. Aizpuru,  C. P\'erez-Eslava, F. Garc\'ia-Pacheco and J. Seoane-Sep\'ulveda established in \cite{APGS} that the set of all functions $f:\R \rightarrow \R$ which are continuous only at the points of a fixed open set $U$ (a fixed $G_\delta$ set, respectively) is lineable (coneable). Moreover, A. Bartoszewicz, M. Bienias and S. G\l\c{a}b proved $2^\Co$-algebrability of the family of functions whose sets of continuity points equal to $K$ for a fixed closed set $K\subsetneq\R$ (or $K\subsetneq\C$) \cite{Bern}. Let $G\subseteq [0,1]$ be a $G_\delta$ set and consider the set $\mathcal{C}_G$ of all functions $f:[0,1]\to\R$ which set of continuity points is exactly $G.$

Let us introduce the following notion.
\begin{definition}
We say that a set $A\subseteq[0,1]$ is $\Co$-dense in itself, if for any open interval $I$ we have either $I\cap A=\emptyset$ or $|I \cap A|=\Co.$
\end{definition}

A. Bartoszewicz, S. G\l \c{a}b and A. Paszkiewicz obtained the following characterization.
\begin{theorem}\cite{BGP}
The following conditions are equivalent.
\begin{itemize}
\item[$(i)$] $\mathcal{C}_G$ is strongly $2^\Co$-algebrable;
\item[$(ii)$] $\mathcal{C}_G$ is $\Co^+$-lineable;
\item[$(iii)$] $\R\setminus G$ is $\Co$-dense in itself.
\end{itemize}
\end{theorem}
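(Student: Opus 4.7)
The plan is to establish the cyclic chain $(i) \Rightarrow (ii) \Rightarrow (iii) \Rightarrow (i)$.

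The implication $(i) \Rightarrow (ii)$ is immediate: a free algebra on $2^{\Co}$ generators has $\R$-vector space dimension $2^{\Co} \geq \Co^{+}$.

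For $(ii) \Rightarrow (iii)$ I would argue by contrapositive. Suppose $\R \setminus G$ is not $\Co$-dense in itself and fix an open interval $I \subseteq [0,1]$ with $D := I \cap (\R \setminus G)$ nonempty and $|D| < \Co$. Since $G$ is $G_\delta$, the set $[0,1] \setminus G$ is $F_\sigma$, hence so is $D$; being a Borel subset of $\R$, $D$ satisfies the perfect set dichotomy, so $|D| \leq \aleph_0$. Let $V \subseteq \mathcal{C}_G \cup \{0\}$ be a vector subspace. The restriction $V \to V|_{I}$ is injective, since any $f \in V \setminus \{0\}$ with $f|_I \equiv 0$ would be continuous on $I$, contradicting the required discontinuity at the nonempty $D \subseteq I$. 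Let $W := \{f \in V : f|_{I \setminus D} \equiv 0\}$. Then $V/W$ embeds by restriction into $C(I \setminus D)$, since each $f \in V$ is continuous on $I \setminus D = I \cap G$; as $|C(I \setminus D)| = \Co$ by separability, $\dim(V/W) \leq \Co$. Meanwhile $W$ embeds into $\R^{D}$ via $f \mapsto f|_D$: if $f \in W$ satisfies $f|_D = 0$, then together with $f|_{I \setminus D} = 0$ we get $f|_I = 0$, hence $f = 0$ by injectivity of $V \to V|_I$. Since $D$ is countable, $\dim \R^{D} = \Co$, so $\dim W \leq \Co$. Consequently $\dim V \leq \dim(V/W) + \dim W \leq \Co$, contradicting $\Co^{+}$-lineability.

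For $(iii) \Rightarrow (i)$ the strategy is constructive. Since Theorem \ref{method BBF} alone yields only strong $\Co$-algebrability, an enhancement is needed to reach $2^{\Co}$. Using the $\Co$-denseness of $\R \setminus G$ in itself, I would build by transfinite recursion a family $\{F_\alpha : \alpha < 2^{\Co}\} \subseteq \mathcal{C}_G$ of witness functions whose discontinuity profiles on $\R \setminus G$ are combinatorially independent — for instance parameterized by an independent family of $2^{\Co}$ subsets of $\R \setminus G$, which has cardinality $\Co$ under the hypothesis — and take the free generators to be suitable exponential-like combinations of the $F_\alpha$. The freeness check would then mirror the proof of Theorem \ref{method BBF}: every nontrivial polynomial identity reduces at appropriately chosen discontinuity points to an exponential-like expression, nonzero by Lemma \ref{expLem}, while the combinatorial independence of the $F_\alpha$ prevents any unwanted cancellation.

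The main obstacle I anticipate lies in $(iii) \Rightarrow (i)$: producing $2^{\Co}$ genuinely free generators requires substantially more combinatorial structure than the $\Q$-linearly independent scalars that power Theorem \ref{method BBF}, and the witness functions must be designed so that every polynomial combination preserves exactly the discontinuity set $\R \setminus G$. Calibrating the construction so that both the $2^{\Co}$ cardinality and the pointwise discontinuity requirements are maintained in tandem is the delicate core of the argument.
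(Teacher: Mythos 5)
First, a caveat: the paper itself gives no proof of this theorem --- it is quoted from \cite{BGP} --- so there is no internal argument to compare against; your attempt has to stand on its own. Of the three implications, two are fine. $(i)\Rightarrow(ii)$ is immediate as you say. Your $(ii)\Rightarrow(iii)$ is a complete and correct proof: the reduction of $D=I\cap(\R\setminus G)$ to a countable set via the perfect set property for Borel sets is a necessary step and you supply it, the injectivity of $f\mapsto f|_I$ on a subspace of $\mathcal{C}_G\cup\{0\}$ is justified correctly by the forced discontinuity at the nonempty $D$, and the two-step bound $\dim V\le\dim(V/W)+\dim W\le\Co$ via the embeddings $V/W\hookrightarrow C(I\cap G)$ and $W\hookrightarrow\R^{D}$ is sound.

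The genuine gap is $(iii)\Rightarrow(i)$, which is the substantive content of the theorem and which you give only as a statement of intent. Two concrete problems. First, the exponential-like machinery of Theorem \ref{method BBF} is structurally incapable of reaching $2^{\Co}$ generators: its free generators are $\exp\circ(rF)$ with $r$ ranging over a $\Q$-independent subset of $\R$, so there are at most $\Co$ of them, and ``suitable exponential-like combinations of the $F_\alpha$'' cannot escape that bound; an entirely different source of freeness is required. Second, indexing witness functions by an independent family of subsets of $\R\setminus G$ gives linear independence cheaply but not freeness: a product of generators with combinatorially independent discontinuity profiles is governed by the \emph{intersection} of those profiles, and nothing in your sketch rules out a polynomial combination becoming continuous at some point of $\R\setminus G$, or vanishing on a portion, either of which ejects it from $\mathcal{C}_G$. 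You also never identify where hypothesis $(iii)$ enters your construction, yet by your own proof of $(ii)\Rightarrow(iii)$ it must be used essentially. The route that actually works (cf.\ \cite{Bern}, \cite{BGP}) is to build, from independent Bernstein sets, a free algebra of $2^{\Co}$ generators in $\R^{\R}$ all of whose nonzero members are surjective in a strong sense, and to compose its elements with a single witness $F\in\mathcal{C}_G$ whose cluster set at each point of $\R\setminus G$ has cardinality $\Co$ --- it is precisely in constructing such an $F$ that the $\Co$-density of $\R\setminus G$ in itself is consumed. As written, the hard direction is not proved.
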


Moreover, in \cite{BGP}, the authors obtained strong $\Co$-algebrability of $\mathcal{C}_G,$ for some sets $G$ and asked a question about strong $\Co$-algebrability of $\mathcal{C}_{\R\setminus \mathbb{Q}}.$
In this paper we fully answer for the question about strong $\Co$-algebrability of $\mathcal{C}_G$ in the general case (i.e. for any $G_\delta$ set $G\subseteq [0,1]$).
The next result is, very probably, a folklore. 
\begin{proposition}\label{Gdelta}
There exists a function $F\in\mathcal{C}_G$ which has infinitely many limits at each point of its discontinuity.
\end{proposition}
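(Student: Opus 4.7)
The plan is to write $[0,1]\setminus G=B\sqcup W$, where $B:=\overline{G}\setminus G$ and $W:=[0,1]\setminus\overline{G}$, and then construct the desired function as a sum $F=\Phi+\Psi$ in which $\Phi$ realises the oscillatory discontinuity on the boundary piece $B$ and $\Psi$ does so on the open piece $W$. The first observation is that $B$ has empty interior, since any nonempty open subset of $\overline{G}$ must intersect $G$ and therefore cannot lie in $\overline{G}\setminus G$. Writing the $F_\sigma$ set $B$ as $\bigcup_n B_n$ with $B_n$ closed, each $B_n$ is then automatically nowhere dense.

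For the boundary part I would set $\phi_n(x):=\sin(1/\on{dist}(x,B_n))$ for $x\notin B_n$ and $\phi_n(x):=0$ for $x\in B_n$, and define $\Phi:=\sum_{n\ge 1}2^{-n}\phi_n$. The series converges uniformly, and since each $\phi_n$ is continuous on $[0,1]\setminus B_n$, the sum $\Phi$ is continuous on $[0,1]\setminus B$. At $x\in B$, letting $n_0:=\min\{n:x\in B_n\}$, the nowhere-density of $B_{n_0}$ ensures that in every neighbourhood of $x$ there are points $y$ with $\on{dist}(y,B_{n_0})$ positive and arbitrarily small, so the cluster set of $\phi_{n_0}$ at $x$ equals $[-1,1]$. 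Passing to subsequences along which the remaining summands also converge, the cluster set of $\Phi$ at $x$ contains an interval of length $2^{1-n_0}$, hence is infinite.

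For the open part I would decompose $W=\bigsqcup_k(a_k,b_k)$ and on each component fix a sequence $(p_{k,j},t_j)_{j\ge 1}$ with the $p_{k,j}\in(a_k,b_k)$ distinct, the $t_j\in[-1,1]$, and the pair sequence dense in $(a_k,b_k)\times[-1,1]$. Define $\psi_k(p_{k,j}):=t_j\cdot\on{dist}(p_{k,j},\{a_k,b_k\})$ and $\psi_k\equiv 0$ at the remaining points of $[a_k,b_k]$, then set $\Psi:=\psi_k$ on $[a_k,b_k]$ and $\Psi:=0$ on $[0,1]\setminus\overline{W}$. Because $|\psi_k|\le b_k-a_k$ and only finitely many components of $W$ have length exceeding any prescribed $\varepsilon>0$, the function $\Psi$ is continuous on $[0,1]\setminus W=\overline{G}$; at any $x\in(a_k,b_k)$, density of the chosen pairs forces the cluster set of $\Psi$ at $x$ to contain $[-D_x,D_x]$, where $D_x:=\on{dist}(x,\{a_k,b_k\})>0$, again infinite.

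Setting $F:=\Phi+\Psi$ then yields continuity precisely on $G$: at every $x\in[0,1]\setminus G$ exactly one of $\Phi,\Psi$ is continuous at $x$ (the one whose defining set does not contain $x$), while the other contributes an infinite interval of cluster values, so $F$ has infinitely many partial limits there. I expect the main obstacle to be the cluster-set analysis for $\Phi$ when $x$ belongs to $B_n$ for many $n\ge n_0$ simultaneously: a diagonal/subsequence argument is needed to guarantee that while $\phi_{n_0}$ realises every value in $[-1,1]$ along an approaching sequence, the remaining oscillating summands do not conspire to collapse the target interval.
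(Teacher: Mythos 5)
First, a point of reference: the paper does not actually prove this proposition --- it labels it folklore and points to \cite{BGP} for the construction --- so there is no in-text argument to compare yours against; your proposal has to stand on its own. Much of it does. The splitting $[0,1]\setminus G=(\overline G\setminus G)\cup([0,1]\setminus\overline G)$ is sound, $B$ does have empty interior so each closed $B_n\subseteq B$ is nowhere dense, and the $\Psi$-half is essentially correct: for $x\notin W$ one has $|\Psi(y)|\le\on{dist}\big(y,[0,1]\setminus W\big)\le|y-x|$ for all $y$, which gives continuity on $\overline G$ (a cleaner reason than the ``finitely many long components'' one you give), while at interior points of a component the density of the pairs $(p_{k,j},t_j)$ forces the cluster set to contain $[-D_x,D_x]$. (One boundary case to patch: if $0\notin\overline G$ then the component of $W$ containing $0$ gives $D_0=0$ and your $\Psi$ is continuous at $0$; the damping factor should be the distance to $\overline G$, not to the endpoints of the component.)

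The genuine gap is precisely the one you flag in your last sentence, and it is fatal to the $\Phi$-half as written rather than a technicality awaiting a diagonal argument. Fix $x\in B$ and $n_0=\min\{n:x\in B_n\}$, and write $\Phi=C+2^{-n_0}\phi_{n_0}+R$ with $C=\sum_{n<n_0}2^{-n}\phi_n$ continuous at $x$ and $R=\sum_{n>n_0}2^{-n}\phi_n$, so that $\|R\|_\infty\le\sum_{n>n_0}2^{-n}=2^{-n_0}$. For each target $t\in[-1,1]$ your subsequence argument produces a cluster value $C(x)+2^{-n_0}t+r_t$ with $r_t\in[-2^{-n_0},2^{-n_0}]$, but you have no control over how $r_t$ depends on $t$: the interval in which $r_t$ lives has exactly the same length as the range of $2^{-n_0}t$, so nothing rules out $t\mapsto 2^{-n_0}t+r_t$ being constant. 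Consequently the argument does not even certify that $\Phi$ is discontinuous at every point of $B$, let alone that its cluster set there is infinite. This cannot be repaired by changing the weights: with weights $c_n$ the same triangle-inequality bookkeeping guarantees only about $c_{n_0}\big/\sum_{n>n_0}c_n$ distinct cluster values, which is finite for every summable positive sequence, and at a point $x$ lying in infinitely many essentially different $B_n$ (with $\bigcup_n B_n$ locally dense, so that you cannot retreat to whole intervals avoiding all the $B_n$) I see no way to exclude the adversarial cancellation with your $\phi_n=\sin(1/\on{dist}(\cdot,B_n))$. A correct treatment of the boundary part needs a mechanism in which only one ``scale'' oscillates at each point, or oscillations whose values are separated enough to be decoded from the sum; this is what the construction cited in \cite{BGP} arranges, e.g.\ by assigning to $x\notin G$ a value determined by $\min\{n:x\in B_n\}$ together with the member of a countable partition of $[0,1]$ into dense sets that contains $x$, plus an extra oscillation on $G$ near the points where $[0,1]\setminus G$ is locally countable.
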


The construction of the function from the assertion of Proposition \ref{Gdelta} was presented for example in \cite{BGP}.
By Theorem \ref{method BBF} we can obtain the following result.

\begin{theorem}
The set $\mathcal{C}_G$ is strongly $\Co$-algebrable.
\end{theorem}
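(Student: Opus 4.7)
The approach is to apply Theorem~\ref{method BBF} with the witness $F\in\mathcal{C}_G$ provided by Proposition~\ref{Gdelta}, i.e.\ a function whose continuity set is exactly $G$ and which possesses infinitely many distinct limit values at every point of $[0,1]\setminus G$. Thus I need to verify that, for every exponential-like $f:\R\to\R$, the composition $f\circ F$ still lies in $\mathcal{C}_G\setminus\{0\}$.

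First I would handle the two easy points. Continuity of $f\circ F$ on $G$ is immediate: $F$ is continuous at every point of $G$ and $f$ is continuous on all of $\R$, so their composition is continuous throughout $G$. To see that $f\circ F$ is not identically zero, recall that by Lemma~\ref{expLem} the zero set $f^{-1}(\{0\})$ contains at most $m$ elements, whereas $F$ attains infinitely many values in any neighborhood of any discontinuity point (and the case $G=[0,1]$ can be handled by choosing $F$ non-constant and continuous); hence some $x$ gives $f(F(x))\neq 0$.

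The core step is to show that $f\circ F$ is discontinuous at every $x_0\in[0,1]\setminus G$. Let $L_1,L_2,\dots$ be the infinitely many distinct limit values of $F$ at $x_0$, each realized along some sequence $x_n^{(i)}\to x_0$ with $F(x_n^{(i)})\to L_i$. Because $f$ is continuous on $\R$, each $L_i\in\R$ yields a cluster value $f(L_i)$ of $f\circ F$ at $x_0$. By Lemma~\ref{expLem}, every fiber $f^{-1}(\{c\})$ has at most $m$ elements, so the assignment $i\mapsto f(L_i)$ cannot be constant on an infinite set; at least two distinct real numbers occur among the $f(L_i)$'s, which rules out continuity of $f\circ F$ at $x_0$. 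Combining these three facts and invoking Theorem~\ref{method BBF} yields the strong $\Co$-algebrability of $\mathcal{C}_G$.

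The main obstacle I foresee is a mild technical one: the construction from Proposition~\ref{Gdelta} might allow some of the limits $L_i$ of $F$ at $x_0$ to be $\pm\infty$. This is not fatal, since an exponential-like function has definite limits in $\{0,+\infty,-\infty\}$ at $\pm\infty$, so those sequences still produce cluster values of $f\circ F$ in $\R\cup\{\pm\infty\}$, and the cardinality bound from Lemma~\ref{expLem} again forces multiple distinct cluster values. Alternatively, one may simply restrict attention to those $L_i$ that are finite, of which there are still infinitely many in the standard construction. Either way, the pigeonhole argument goes through and the theorem follows.
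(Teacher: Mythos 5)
Your proposal is correct and follows essentially the same route as the paper: take the witness $F$ from Proposition~\ref{Gdelta}, note continuity of $f\circ F$ on $G$, and at each discontinuity point use the infinitely many limit values of $F$ together with the finite-fiber property from Lemma~\ref{expLem} to pigeonhole two distinct cluster values of $f\circ F$, then invoke Theorem~\ref{method BBF}. Your extra remarks on non-vanishing and on possibly infinite limit values are harmless refinements of the same argument.
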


\begin{proof}
Let $F\in \mathcal{C}_G$ be a function like in Proposition \ref{Gdelta}. Let $f:[0,1]\to\R$ be an exponential-like function of rank $m$. We will show that $f\circ F\in\mathcal{C}_G\setminus\{ 0\}.$ Take a point $x\in [0,1]$. We have the following
\begin{itemize}
\item[$1.$] If $F$ is continuous at $x$, then clearly $f\circ F$ is also continuous at $x$;
\item[$2.$] If $F$ is not continuous at $x$, then there are sequences (cf. Proposition \ref{Gdelta}) $(t^{(k)}_n)_{n\in\N}$ for $k\in\{1,...,m+1\}$ such that for all $k$ we have $t^{(k)}_n \to x$ and $F(t^{(k)}_n)\to y^{(k)}$ with $y^{(k)}\neq y^{(l)}$ for $k\neq l.$ Then $f\circ F(t^{(k)}_n)\to f(y^{(k)}).$ By the Lemma \ref{expLem} we have $f(y^{(k)})\neq f(y^{(l)})$ for some $k\neq l$ so $f\circ F$ is not continuous at $x.$
\end{itemize}
Hence, for every exponential-like function $f,$ we have $f\circ F\in\mathcal{C}_G\setminus\{ 0\}$ so, by Theorem \ref{method BBF}, we get a strong $\Co$-algebrability of $\mathcal{C}_G.$
\end{proof}

\section{Continuous functions with other interesting properties}

This section will be devoted to different classes of real functions connected with continuity. Let us recall the following notion.
\begin{definition}
We say that a function $f:\R\to\R$ is
\begin{itemize}
\item of the class $\mathcal{C}_0$, provided it is continuous with respect to the natural topology both in the domain and in the range;
\item of the class $\mathcal{C}_n$, provided it is differentiable $n$ times, for $n\in\N;$
\item approximately continuous (denote by $f\in \mathcal{AP}$) provided it is continuous with respect to the classical density topology in the domain and natural topology in the range;
\item of the class $\mathcal{C}_{\infty}$, provided it is smooth (i.e. it has derivatives of all orders);
\item analytic (denote by $f\in\mathcal{A}$), provided it is locally given by a convergent power series.
\end{itemize}
\end{definition}

It is well known that the following inclusions
$$\mathcal{AP} \supseteq \mathcal{C}_0 \supseteq \mathcal{C}_1 \supseteq ... \supseteq \mathcal{C}_{n-1} \supseteq \mathcal{C}_n \supseteq ... \supseteq \mathcal{C}_\infty \supseteq \mathcal{A}$$
hold and are strict. Moreover all of the differences have cardinality $\Co,$ so it is a natural question how big in the sense of algebrability they are.
In this section, using exponential-like functions, we show that all of them are strongly $\Co$-algebrable.

\subsection{Inclusion $\mathcal{AP} \supsetneq \mathcal{C}_0$}

Approximate continuity is a generalization of the concept of continuity, in which the ordinary limit is replaced by an approximate limit. A real function $f$ is approximately continuous at a point $x$, if and only if the approximate limit of $f$ at $x$ exists and equals $f(x),$  which means, there exists a Lebesgue measurable set $E$ such that $$\lim_{h\to 0^+} \frac{\lambda(E\cap [x-h,x+h]}{2h}=1$$ and $f|{E\cup \{x\}}$ is a continuous function. A. Denjoy (see \cite{Den}) proved in 1915 that any Lebesgue measurable function is approximately continuous almost everywhere. In 1952, O. Haupt and Ch. Pauc (see \cite{HP}) introduced a notion of density topology $\tau_d.$ For a nonempty measurable set $E,$ $E\in \tau_d$, if and only if $\lim_{h\to 0^+} \frac{\lambda(E\cap [x-h,x+h]}{2h}=1,$ for any $x\in E.$ A function $f$ is approximately continuous, if and only if it is continuous with respect to the density topology in the domain and natural topology in the range.

In \cite{8}, the authors proved, that the set of bounded approximately continuous mappings defined on $\R$ and discontinuous almost everywhere is $\Co$-lineable. Here, we will take this result to the highest possible level of algebrability.

\begin{proposition}{\cite{Bru}}\label{aprox}
Let $E\subseteq [0,1]$ be a dense $G_\delta$ set of measure zero. There exists a bounded approximately continuous function $F:[0,1]\to[0,\infty)$ such that $F^{-1}(\{0\})=E.$ In particular $F$ has the Darboux property.
\end{proposition}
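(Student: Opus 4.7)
The plan is to follow the classical construction due to Zahorski and presented in detail in Bruckner's monograph \cite{Bru}. Since $E$ is a $G_\delta$ set of measure zero, fix a decreasing sequence $(U_n)$ of open subsets of $[0,1]$ with $\bigcap_n U_n=E$ and $\lambda(U_n)<4^{-n}$, and put $K_n:=[0,1]\setminus U_n$. The sets $K_n$ are closed, nowhere dense (because $E$ is dense in $[0,1]$), form an increasing exhaustion $\bigcup_n K_n=[0,1]\setminus E$, and satisfy $\lambda(K_n)>1-4^{-n}$.

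A natural first attempt is $F_0:=\sum_{n\ge 1}2^{-n}\chi_{K_n}$. Then $F_0$ is bounded by $1$, and $F_0^{-1}(\{0\})=E$. At any point $x\in E$ one already has approximate continuity of $F_0$: each level set $\{F_0\ge c\}$ coincides with some $K_N$, and if $x\in E\subseteq U_N$ then $U_N$ is an open neighbourhood of $x$ disjoint from $K_N$, so $K_N$ has density zero at $x$. Unfortunately $F_0$ may fail approximate continuity at certain points $x\in E^c$: if $n_0(x)$ denotes the least $n$ with $x\in K_n$, the open set $U_{n_0(x)}$ can have positive density at $x$, and then $\{F_0\ne F_0(x)\}$ does too.

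To repair this one applies the Lebesgue density theorem. Let $A_n:=K_n\cap D(K_n)$ be the set of density points of $K_n$ which belong to $K_n$, so $\lambda(A_n)=\lambda(K_n)$ and every point of $A_n$ is a density point of $A_n$. Using Lusin's theorem one then constructs approximately continuous functions $g_n\colon[0,1]\to[0,2^{-n}]$ that take the value $2^{-n}$ on $A_n$, vanish on an open neighbourhood of $E$ inside $U_{n+1}$, and are engineered on the transitional annulus $U_n\setminus U_{n+1}$ so as to be approximately continuous at every point. The exceptional null sets $K_n\setminus A_n$ are absorbed into subsequent levels $g_m$, $m>n$. Put $F:=\sum_{n\ge 1}g_n$. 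The series converges uniformly, so $F$ is bounded; since approximate continuity is preserved by uniform convergence of uniformly bounded sequences, $F$ is approximately continuous, and the construction ensures $F^{-1}(\{0\})=E$.

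The main obstacle is precisely the construction of the intermediate $g_n$: making the boundary-layer behaviour precise enough to yield approximate continuity at every point, not merely almost every point. This is the technical heart of the classical proof and is carried out in detail in \cite{Bru}. The final Darboux-property clause then requires no further work: by a classical theorem (attributable to Denjoy, see \cite{Bru}) every approximately continuous function on an interval has the Darboux property, because approximate continuity coincides with continuity with respect to the density topology and that topology is connected on intervals.
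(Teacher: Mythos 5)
The paper offers no proof of this proposition at all: it is imported from Bruckner's monograph \cite{Bru} as a known result, so there is no in-paper argument to measure your sketch against. Your outline does follow the correct classical (Zahorski-type) route, and your analysis of the naive step function $F_0=\sum_n 2^{-n}\chi_{K_n}$ is accurate on both counts: the level sets $\{F_0\ge c\}$ are the $K_N$, so $F_0$ is even genuinely continuous at each point of $E$, while approximate continuity can fail at points of $[0,1]\setminus E$ that are not density points of the first $K_n$ containing them.

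However, the repair you propose has a concrete gap at exactly the step you label the ``technical heart''. The claim that the null sets $K_n\setminus A_n$ are ``absorbed into subsequent levels'' does not work: since the $K_n$ increase, a density point of $K_n$ is automatically a density point of every later $K_m$, but nothing forces a point of $K_n\setminus A_n$ ever to become a density point of some later $K_m$. The set of $x\in[0,1]\setminus E$ that are density points of \emph{no} $K_m$ is only guaranteed to be null, not empty; at such a point every $g_n$ in your scheme vanishes, so $F(x)=0$ with $x\notin E$, destroying $F^{-1}(\{0\})=E$. The tool that closes this gap is not Lusin's theorem (which concerns restricting measurable functions to large closed sets) but the Luzin--Menchoff theorem: given a closed set $F$ contained in a measurable set $A$ with every point of $F$ a density point of $A$, one can interpolate a closed $F'$ with $F\subseteq F'\subseteq A$ such that every point of $F$ is a density point of $F'$. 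Applying this recursively to the exhaustion of $[0,1]\setminus E$ (using that $\lambda(E)=0$ makes every point of $[0,1]\setminus E$ a density point of $[0,1]\setminus E$) is what the construction in \cite{Bru} actually does. Since you explicitly defer that step to \cite{Bru}, your proposal is in effect an annotated citation --- which matches the paper's own treatment --- but the one mechanism you do spell out for the hard step is the one that fails. Your closing remarks (uniform limits of approximately continuous functions are approximately continuous; the Darboux property follows from connectedness of the density topology on intervals) are correct.
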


Observe here, that the function $F$ from Proposition \ref{aprox}, is discontinuous outside the set $E$ (i.e. is discontinuous almost everywhere). Now we are able to state the following.

\begin{theorem}\label{ap}
The set of approximately continuous functions that are discontinuous almost everywhere is strongly $\Co$-algebrable.
\end{theorem}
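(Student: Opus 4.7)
The plan is to apply Theorem \ref{method BBF}, taking as the base function $F$ the bounded approximately continuous function from Proposition \ref{aprox}, whose zero set is a prescribed dense $G_\delta$ set $E\subseteq[0,1]$ of Lebesgue measure zero and which has the Darboux property. Since $F$ is discontinuous at every point of $[0,1]\setminus E$, it already belongs to the target class. It remains to verify that for every exponential-like $f:\R\to\R$ of rank $m$, the composition $f\circ F$ is again approximately continuous, discontinuous almost everywhere, and not identically zero.

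Approximate continuity of $f\circ F$ is immediate and standard: given $x\in[0,1]$, continuity of $f$ at $F(x)$ converts any $\varepsilon$-neighborhood of $f(F(x))$ into a $\delta$-neighborhood of $F(x)$, so the density-$1$ set witnessing approximate continuity of $F$ at $x$ also witnesses approximate continuity of $f\circ F$ at $x$. Nontriviality of $f\circ F$ follows from the Darboux property of $F$: the range of $F$ is a nondegenerate subinterval of $[0,\infty)$, and by Lemma \ref{expLem} the function $f$ is not identically zero on any interval, hence $f\circ F\not\equiv 0$.

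The heart of the argument, and the step requiring the most care, is showing that $f\circ F$ is discontinuous on the set $[0,1]\setminus E$ of full measure. Fix $x\notin E$, so $F(x)>0$. Since $E$ is dense and $F\equiv 0$ on $E$, there is a sequence $y_n\to x$ with $F(y_n)=0$. Combining this with the Darboux property of $F$ applied on the intervals joining $x$ to $y_n$, one concludes that every value $v\in[0,F(x)]$ is a cluster value of $F$ at $x$. By Lemma \ref{expLem}, $f$ is not constant on the nondegenerate interval $[0,F(x)]$, so one can pick $v_1,v_2\in[0,F(x)]$ with $f(v_1)\neq f(v_2)$; the images $f(v_1)$ and $f(v_2)$ are then two distinct cluster values of $f\circ F$ at $x$, forcing $f\circ F$ to be discontinuous at $x$. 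Hence the set of continuity points of $f\circ F$ is contained in $E$ and therefore has measure zero.

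With these three properties established for arbitrary exponential-like $f$, Theorem \ref{method BBF} yields that $\{\exp\circ(rF):r\in H\}$ freely generates an algebra contained in the target class whenever $H\subseteq\R$ is a $\Q$-linearly independent set of cardinality $\Co$. The main obstacle is the a.e.\ discontinuity verification, since a priori $f$ could in principle identify the two natural cluster values $0$ and $F(x)$ of $F$ at a bad point; the Darboux property circumvents this by providing a continuum of cluster values, on which no exponential-like function can be constant.
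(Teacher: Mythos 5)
Your proposal is correct and follows essentially the same route as the paper: the same base function $F$ from Proposition \ref{aprox}, approximate continuity of $f\circ F$ by continuity of $f$, and discontinuity off $E$ via the density of $E$ together with the Darboux property of $F$ and Lemma \ref{expLem}. The only cosmetic difference is that you observe at once that every value in $[0,F(x)]$ is a cluster value of $F$ at $x$ and then pick two with distinct $f$-images, whereas the paper splits into the cases $(f\circ F)(x)\neq f(0)$ and $(f\circ F)(x)=f(0)$; the underlying idea is identical.
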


\begin{proof}
Let $E\subseteq [0,1]$ be a dense $G_\delta$ set of measure zero and $F:[0,1]\to[0,\infty)$ be like in Proposition \ref{aprox}. Let $f:\R\to\R$ be an exponential-like function. Clearly the function $f\circ F$ is approximately continuous. We will show that it is discontinuous on the set $[0,1]\setminus E.$
If $(f\circ F)(x)\neq p:= f(0),$ for some $x\in [0,1],$ then $f\circ F$ is not continuous at $x.$ Indeed, there exists a sequence $(x_k)_{k\in\N}$ of elements of $E$ convergent to $x$ and then, by the definition of $F$, we have that $(f\circ F)(x_k)=p$ for any $k\in \N.$

Hence, it is enough to show that $f\circ F$ is discontinuous on the set $(f\circ F)^{-1}(\{p\})\setminus E.$ To see this, fix $x\in (f\circ F)^{-1}(\{p\})\setminus E,$ then $F(x)=s_0>0.$ By Lemma \ref{expLem}, there exists $s\in(0,s_0)$ with $f(s)\neq p.$ Moreover, for any $k\in\N,$ there is $z\in E \cap (x,x+\frac{1}{k}).$ By the Darboux property of $F,$ one can find $x_k \in (x,z)$ with $F(x_k)=s.$ Therefore, there is a sequence $(x_k)_{k\in\N}$ convergent to $x,$ such that $F(x_k)=s$ for every $k\in \N.$ Hence, $(f\circ F)(x_k)$ tends to $f(s)\neq p,$ so $(f\circ F)$ is not continuous at $x.$
This means, that $f\circ F$ is approximately continuous and discontinuous almost everywhere. Hence, by Theorem \ref{method BBF} we are done.
\end{proof}

Let us point out, that the function $F$ in the proof is bounded. Moreover, for any exponential-like function $f,$ the composition $f\circ F$ is bounded too. Hence, we may remark the following.

\begin{remark}
The set of bounded approximately continuous functions that are discontinuous almost everywhere is strongly $\Co$-algebrable.
\end{remark} 

Remark, that every approximately continuous function $f$ has a point of continuity on any interval (cf. \cite{BBT}). Hence, there is no approximately continuous function that is discontinuous everywhere.

\subsection{Inclusion $\mathcal{C}_{n-1} \supsetneq \mathcal{C}_n$}

In the paper \cite{BBF}, the authors considered continuous functions $f:\R\to\R$ that are differentiable almost everywhere but are not differentiable on any interval. Using Theorem \ref{method BBF}, they obtained the following.

\begin{theorem}
Let $n\in\N\cup\{0\}.$ The set of functions $f\in\mathcal{C}_n$ that are differentiable $n+1$ times almost everywhere, but are not differentiable $n+1$ times on any interval, is strongly $\Co$-algebrable.
\end{theorem}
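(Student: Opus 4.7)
The plan is to apply Theorem \ref{method BBF}, so I need a single witness $F$ in the class together with a proof that $f\circ F$ stays in the class for every exponential-like $f$. For the witness, I would start from a continuous $g:\R\to\R$ which is differentiable almost everywhere but differentiable on no interval (classical Weierstrass--Pompeiu-type constructions provide such $g$) and define $F$ to be the $n$-fold iterated integral of $g$, so that $F\in\mathcal{C}_n$ with $F^{(n)}=g$. Then $F$ is $(n+1)$-times differentiable exactly at the points where $g$ is differentiable, which is a set of full measure but contains no interval.

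Next I would check, for an arbitrary exponential-like $f$, that $f\circ F$ inherits both properties. Since $f$ is smooth, the chain rule gives $f\circ F\in\mathcal{C}_n$ immediately, and wherever $F^{(n+1)}$ exists the derivative $(f\circ F)^{(n+1)}$ exists as well (a polynomial expression in the $F^{(k)}$ and in $f^{(j)}(F(x))$), so $f\circ F$ is $(n+1)$-times differentiable almost everywhere.

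The crux is the negative direction: $f\circ F$ must fail to be $(n+1)$-times differentiable on every interval. Suppose, toward a contradiction, that $f\circ F$ is $(n+1)$-times differentiable on some interval $I$. Differentiating the exponential-like formula shows that $f'$ is again exponential-like (or identically zero, which cannot occur), so by Lemma \ref{expLem} the set of critical points $\{c_1,\dots,c_k\}$ of $f$ is finite. Moreover $F$ is not locally constant on any interval---otherwise $F$ would be smooth on that subinterval, contradicting its defining property---so $F^{-1}(\{c_1,\dots,c_k\})$, being the union of finitely many closed sets with empty interior, has empty interior in $I$. Pick a subinterval $J\subseteq I$ disjoint from $F^{-1}(\{c_1,\dots,c_k\})$. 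Then $F(J)$ is a connected subset of $\R\setminus\{c_1,\dots,c_k\}$, hence contained in a single maximal interval of monotonicity of $f$ supplied by Lemma \ref{expLem}; on that interval $f$ is a smooth diffeomorphism onto its image. Therefore on $J$ one may write $F=f^{-1}\circ(f\circ F)$, so $F$ is $(n+1)$-times differentiable on $J$, contradicting the construction of $F$. Applying Theorem \ref{method BBF} then gives the desired strong $\Co$-algebrability.

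The main obstacle is precisely this last step, the local invertibility argument: everything hinges on observing that non-local-constancy of $F$ is automatic from its defining property, and then using the finiteness of critical points of $f$ together with the monotonicity decomposition of Lemma \ref{expLem} to invert $f$ smoothly on a suitable subinterval.
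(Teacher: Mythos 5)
Your argument is correct. Note that the paper does not actually prove this theorem---it only quotes it from \cite{BBF}, where it was likewise obtained by applying Theorem \ref{method BBF}---so your proposal in effect reconstructs the missing proof, and it does so soundly: the witness (an $n$-fold iterated integral of a continuous, a.e.-differentiable but nowhere-interval-differentiable $g$), the Fa\`a di Bruno argument for the positive direction, and the key negative step (finiteness of the critical set of $f$ via Lemma \ref{expLem} applied to $f'$, non-local-constancy of $F$, and smooth local inversion of $f$ on a subinterval $J$ avoiding $F^{-1}$ of the critical points) are all in order. The only cosmetic points are that Theorem \ref{method BBF} is stated for domains $[0,1]$ rather than $\R$, which is immaterial, and that one should remark that exponential-like functions are smooth so that the chain-rule steps apply.
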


We will consider a stronger condition of non-differentiability of a function $f:\R\to\R.$ It is well known that the set of all nowhere differentiable functions is comeager in $C[0,1].$ 

Let us recall the notion of nowhere H\"older function.
\begin{definition}
We say that a continuous function $F:[0,1]\to\R$ is nowhere H\"older provided that for any $x\in[0,1]$ and any $\alpha\in(0,1]$ $$\lim_{y\to x}\frac{|F(x)-F(y)|}{|x-y|^\alpha}=\infty.$$ Let us denote the set of all nowhere H\"older functions by $\mathcal{NH}.$
\end{definition}

This kind of continuous function was considered by many authors. J. Kol\'a\v{r} in \cite{Kol}, proved that the set $\mathcal{NH}$ is big in several senses (i.e. it was shown, that $C[0,1]\setminus \mathcal{NH}$ is HP-small, hence it is meager and Haar-null).
It is easy to check that any nowhere H\"older function is nowhere differentiable, hence $\mathcal{NH}\subsetneq \mathcal{C}_0\setminus \mathcal{C}_1.$ Some authors studied spaceability and algebrability of $\mathcal{NH}$. In \cite{Hen}, S. Hencl proved that the set of nowhere approximately differentiable and nowhere H\"older functions is spaceable. Moreover, F. Bayart and L. Quarta in \cite{BQ} showed that $\mathcal{NH}$ is $\omega$-algebrable. Here, following the idea from \cite{BQ} and making use of exponential-like functions, we prove strong $\Co$-algebrability of $\mathcal{NH}.$ At first, let us obtain a general result.

\begin{theorem}\label{aNH}
For any nonconstant analytic function $f:\R\to\R$ and any $F\in\mathcal{NH},$ $f\circ F\in\mathcal{NH}.$
\end{theorem}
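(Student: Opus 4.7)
The plan is to bound $|f(F(y)) - f(F(x_0))|$ from below by a power of $|F(y)-F(x_0)|$ using the local power-series structure of analytic functions, and then invoke the nowhere H\"older property of $F$ with a suitably rescaled exponent.

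First I would fix $x_0\in[0,1]$ and $\alpha\in(0,1]$, and set $z_0:=F(x_0)$. Since $f$ is nonconstant and real-analytic on $\R$, the identity principle forbids $f\equiv f(z_0)$ on any neighborhood of $z_0$, so there is a smallest integer $k\geq 1$ with $f^{(k)}(z_0)\neq 0$. On an open interval $U\ni z_0$ where the Taylor expansion converges one then has the factorization
$$f(z)-f(z_0)=(z-z_0)^k h(z),$$
with $h$ analytic on $U$ and $h(z_0)=f^{(k)}(z_0)/k!\neq 0$. Shrinking $U$ if necessary, continuity of $h$ yields $|h(z)|\geq c:=|h(z_0)|/2>0$ for every $z\in U$.

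Next, by continuity of $F$ at $x_0$, there is $\delta>0$ such that $F(y)\in U$ whenever $|y-x_0|<\delta$. For such $y$ the factorization gives
$$|f(F(y))-f(F(x_0))|\geq c\,|F(y)-F(x_0)|^k,$$
so, dividing by $|y-x_0|^\alpha$ and regrouping the exponents,
$$\frac{|f(F(y))-f(F(x_0))|}{|y-x_0|^\alpha}\geq c\left(\frac{|F(y)-F(x_0)|}{|y-x_0|^{\alpha/k}}\right)^{\!k}.$$

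Finally, since $k\geq 1$ and $\alpha\in(0,1]$, the new exponent $\alpha/k$ also lies in $(0,1]$, so the nowhere H\"older property of $F$ applied with exponent $\alpha/k$ forces the bracketed ratio to tend to $+\infty$ as $y\to x_0$; raising to the positive integer power $k$ and multiplying by $c>0$ preserves this divergence, giving the required limit. The only real obstacle is handling the case $f'(z_0)=0$, and this is precisely where the \emph{analyticity} of $f$ (rather than mere smoothness) is crucial: it guarantees a finite order of vanishing $k$ at every $z_0$, hence a clean factorization as a power times a nonvanishing analytic factor. Without this, one could not absorb flat points of $f$ into a rescaling of the H\"older exponent.
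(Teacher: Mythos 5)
Your proof is correct and follows essentially the same route as the paper's: identify the least order $k$ of a nonvanishing derivative of $f$ at $F(x_0)$, exploit the local behavior $f(z)-f(z_0)\asymp (z-z_0)^k$, and apply the nowhere H\"older property of $F$ with the rescaled exponent $\alpha/k\in(0,1]$. Your explicit factorization $f(z)-f(z_0)=(z-z_0)^k h(z)$ with $|h|\ge c>0$ near $z_0$ is in fact a slightly cleaner substitute for the paper's appeal to the Lagrange form of the Taylor remainder, where the dependence of the intermediate point $z$ on $y$ is left implicit.
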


\begin{proof}
Let $F:[0,1]\to\R$ be a nowhere H\"older function and let $f:\R\to\R$ be a nonconstant analytic function. Fix a point $x\in[0,1]$ and $\alpha\in(0,1]$. We will prove that $f\circ F$ is not $\alpha$-H\"older at $x.$ Notice, that since $f$ is nonconstant and analytic, there is $k\in\N$ with $f^{(k)}(F(x)) \neq 0,$ where $f^{(k)}(F(x))$ stands for the $k$-th derivative of $f$ at $F(x).$ Assume that $k$ is the least natural number with this property. Let $y\in [0,1],$ by the Taylor expansion of $f\circ F$ in $x,$ we have $$f(F(x))-f(F(y))=f^{(k)}(z)(F(x)-F(y))^k,$$ for some $z$ from the neighborhood of $F(x).$ Therefore, $$\frac{|f(F(x))-f(F(y))|}{|x-y|^\alpha}=|f^{(k)}(z)|\left(\frac{|(F(x)-F(y))|}{|x-y|^{\frac{\alpha}{k}}}\right)^k$$ so by taking the limit with $y\to x,$ we get the assertion.
\end{proof}

One can easily check, that any exponential-like function is analytic and, by Lemma \ref{expLem}, it is nonconstant. Hence, by Theorem \ref{aNH} and Theorem \ref{method BBF}, we obtain:

\begin{corollary}
The set $\mathcal{NH}$ is strongly $\Co$-algebrable.
\end{corollary}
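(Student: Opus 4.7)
The plan is to invoke Theorem \ref{method BBF} with $\mathcal{F}=\mathcal{NH}$, so the entire task reduces to producing a single witness $F\in\mathcal{NH}$ such that $f\circ F\in\mathcal{NH}\setminus\{0\}$ for every exponential-like $f:\R\to\R$. First I would fix an arbitrary $F\in\mathcal{NH}$; the existence of such a function is classical (e.g.\ a suitable lacunary Weierstrass-type series), and this $F$ stays fixed for the rest of the argument.

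Next I would check that every exponential-like function $f(x)=\sum_{i=1}^m a_i e^{\beta_i x}$ satisfies the hypotheses of Theorem \ref{aNH}. Analyticity is immediate: $f$ is a finite linear combination of entire exponentials and hence entire, in particular real-analytic on $\R$. Nonconstancy is also immediate from Lemma \ref{expLem}, which asserts that every preimage $f^{-1}[\{c\}]$ has at most $m$ elements; so $f$ cannot be constant on any interval, much less on all of $\R$.

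With these two observations, Theorem \ref{aNH} yields $f\circ F\in\mathcal{NH}$ for every exponential-like $f$. Since a nowhere H\"older function cannot be identically zero (the condition $\lim_{y\to x}|F(x)-F(y)|/|x-y|^\alpha=\infty$ manifestly fails for the zero function), we actually have $f\circ F\in\mathcal{NH}\setminus\{0\}$. The hypothesis of Theorem \ref{method BBF} is therefore met, and that theorem produces $\Co$ free generators $\exp\circ(rF)$, $r\in H$, of an algebra contained in $\mathcal{NH}\cup\{0\}$, where $H\subseteq\R$ is any set of cardinality $\Co$ that is $\Q$-linearly independent.

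I do not anticipate a real obstacle: the corollary is a formal assembly of Theorem \ref{aNH} and Theorem \ref{method BBF}, both already available. The only point that could look subtle is why polynomial combinations of the generators $\exp\circ(rF)$ land in $\mathcal{NH}\cup\{0\}$, but this is precisely the content built into Theorem \ref{method BBF}: such combinations rewrite as $(f\circ F)$ with $f$ exponential-like, and that case has been handled in the previous two paragraphs.
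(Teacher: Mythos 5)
Your proposal is correct and follows exactly the paper's own route: observe that every exponential-like function is analytic and, by Lemma \ref{expLem}, nonconstant, then combine Theorem \ref{aNH} with Theorem \ref{method BBF}. The extra remarks you add (existence of a nowhere H\"older function, and that such a function is nonzero) are accurate and harmless.
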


Moreover, we have the following.
\begin{theorem}\label{NH1}
The set of functions from $\mathcal{C}_{1}$ which derivative is not $\alpha$-H\"older (for any $\alpha\in(0,1]$) at all but finite many points, is strongly $\Co$-algebrable.
\end{theorem}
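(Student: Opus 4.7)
The plan is to invoke Theorem \ref{method BBF} with a single explicit generator $F$. First I take any continuous nowhere Hölder function $G_0 \in \mathcal{NH}$ on $[0,1]$ (such functions exist by \cite{Kol} or \cite{BQ}), and set $G := G_0 + c$ with $c$ large enough so that $G > 0$ on $[0,1]$. Adding a constant preserves nowhere Hölder-ness, so $G \in \mathcal{NH}$ still. Define $F(x) := \int_0^x G(t)\,dt$; then $F \in \mathcal{C}_1([0,1])$ with $F' = G \in \mathcal{NH}$ (so $F$ itself already lies in the target class, with empty exceptional set), and because $G>0$ the function $F$ is strictly increasing.

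Next, for an arbitrary exponential-like $f : \R \to \R$ I compute $(f \circ F)'(x) = f'(F(x)) \cdot G(x)$. Observe that $f'$ is again exponential-like (with the same exponents $\beta_i \ne 0$ and nonzero coefficients $a_i \beta_i$), so by Lemma \ref{expLem} it has only finitely many zeros $c_1, \ldots, c_k$. Strict monotonicity of $F$ then forces $E := F^{-1}(\{c_1, \ldots, c_k\})$ to contain at most $k$ points.

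The crux is to show that for every $x \notin E$ and every $\alpha \in (0,1]$, the product $h \cdot G$, where $h := f' \circ F$, is not $\alpha$-Hölder at $x$. Since $h(x) \ne 0$, continuity gives $|h(y)| \ge |h(x)|/2$ for $y$ near $x$; and since $h$ is of class $\mathcal{C}_1$ on the compact interval $[0,1]$ (because $f$ is analytic and $F \in \mathcal{C}_1$), it is Lipschitz with some constant $L$. From
$$(hG)(y) - (hG)(x) = h(y)\bigl(G(y) - G(x)\bigr) + G(x)\bigl(h(y) - h(x)\bigr),$$
dividing by $|y-x|^\alpha$ yields the lower bound
$$\left|\frac{(hG)(y) - (hG)(x)}{|y-x|^\alpha}\right| \ge \frac{|h(x)|}{2} \cdot \frac{|G(y) - G(x)|}{|y-x|^\alpha} - L |G(x)|\, |y-x|^{1-\alpha},$$
whose right-hand side tends to $\infty$ as $y \to x$, because $G \in \mathcal{NH}$ forces the first term to explode while the second stays bounded. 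Hence the exceptional set of $(f\circ F)'$ is contained in the finite set $E$, and since $F$ is surjective onto a nondegenerate interval and $f$ is not identically zero on any interval (Lemma \ref{expLem}), $f \circ F \not\equiv 0$. Theorem \ref{method BBF} then gives strong $\Co$-algebrability.

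The main obstacle is precisely the product estimate above: one must check that multiplying a nowhere Hölder function by a $\mathcal{C}_1$ factor which is nonzero at $x$ preserves failure of the $\alpha$-Hölder condition at $x$, uniformly in $\alpha \in (0,1]$, and at the same time control the size of the exceptional set where the factor vanishes. The Lipschitz bound on $h$ tames the second summand, the hypothesis $h(x) \ne 0$ lets the first (blowing-up) summand dominate, and the choice of strictly monotone $F$ converts the finitely-many zeros of the exponential-like $f'$ (via Lemma \ref{expLem}) into a finite exceptional set in $[0,1]$.
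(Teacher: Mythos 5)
Your proof is correct and follows essentially the same route as the paper: both take the generator $F(x)=\int_0^x G(t)\,dt$ for a strictly positive nowhere H\"older $G$, write $(f\circ F)'=(f'\circ F)\cdot G$, and obtain the finite exceptional set from the finitely many zeros of the exponential-like function $f'$ (Lemma \ref{expLem}) combined with the injectivity of $F$. The only difference is in the final estimate: where the paper splits the product difference via a polarization identity and controls the harmless term by a Taylor expansion of $f'$, you use the Leibniz-type splitting together with a Lipschitz bound on $f'\circ F$ --- which is, if anything, cleaner.
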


\begin{proof}
Consider a function $G:[0,1]\to\R$ given by $G(x)=\int_{0}^{x} F(t) dt$ where $F\in\mathcal{NH}$ is strictly positive. Clearly, $G\in\mathcal{C}_1$ and its derivative is nowhere H\"older. Let $f:\R\to\R$ be an exponential-like function. Then $f\circ G\in \mathcal{C}_1.$ Let $x\in [0,1]$ be such that $f'(G(x))\neq 0$ and $\alpha\in(0,1].$ For $y\in[0,1]$ we have $$(f\circ G)'(x)-(f\circ G)'(y)=\frac{1}{2}\left[ (f'(G(x))-f'(G(y)))(F(x)+F(y))  + (f'(G(x))-f'(G(y)))(F(x)-F(y))\right].$$ By the Taylor expansion of $f'$ in $G(x),$ we have 
$$(f\circ G)'(x)-(f\circ G)'(y)= \frac{1}{2}\left[ f^{(k)}(z)(G(x)-G(y))^k(F(x)+F(y)) + (f'(G(x))-f'(G(y)))(F(x)-F(y))\right]$$ where $k\in\N$ is the least natural order of nonzero derivative of $f'$ in $G(x)$ and $z$ is some point from the neighborhood of $G(x).$ Therefore, $$\frac{(f\circ G)'(x)-(f\circ G)'(y)}{|x-y|^\alpha}=\frac{1}{2}\left[ f^{(k)}(z)\left(\frac{(G(x)-G(y))}{|x-y|^{\frac{\alpha}{k}}}\right)^k(F(x)+F(y)) + \right. $$ $$\left. + \big(f'(G(x))-f'(G(y))\big)\frac{F(x)-F(y)}{|x-y|^\alpha}\right].$$ By taking the limit with $y\to x$ we have that $$\frac{(f\circ G)'(x)-(f\circ G)'(y)}{|x-y|^\alpha}\to \infty.$$ Hence, $(f\circ G)'$ is not $\alpha$-H\"older at $x.$  
Observe, that since $G$ is one-to-one, there are finitely many $x\in[0,1]$ with $f'(G(x))=0.$ Therefore, $(f\circ G)'$ is not $\alpha$-H\"older (for any $\alpha\in(0,1]$) at all but finite many points. Hence, the use of Theorem \ref{method BBF} gives the assertion.
\end{proof}

\subsection{Inclusion $\mathcal{C}_\infty \supsetneq \mathcal{A}$}

Let us call a function $g:\R\to\R$ \emph{nowhere analytic}, when it is not analytic at any point. The set of smooth and nowhere analytic functions was considered in the context of algebrability in \cite{GonzAN}, where spaceability of this set was established. 

\begin{theorem}\label{na}
The set of smooth and nowhere analytic functions is strongly $\Co$-algebrable.
\end{theorem}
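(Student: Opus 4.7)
The plan is to invoke Theorem \ref{method BBF}. First I would fix any smooth nowhere analytic function $F:[0,1]\to\R$; such functions exist by classical constructions (lacunary trigonometric series of the form $F(x)=\sum_{k}c_k\cos(a_kx)$ with rapidly growing $a_k$ and fast decaying $c_k$). The task then reduces to showing that $f\circ F$ is smooth, nowhere analytic, and not identically zero for every exponential-like $f:\R\to\R$.

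Smoothness of $f\circ F$ is immediate from the chain rule since exponential-like functions are $\mathcal{C}^\infty$. For non-vanishing, I would note that because $F$ is nowhere analytic it is non-constant on every subinterval (constant functions are analytic), so its image $F([0,1])$ is a non-degenerate interval. By Lemma \ref{expLem} the zero set $f^{-1}(\{0\})$ is finite, so $f\circ F$ cannot vanish on the whole of $[0,1]$ and in fact is nonzero off a nowhere-dense set.

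The main obstacle is the nowhere analyticity of $f\circ F$. I would argue by contradiction: assume $f\circ F$ is analytic on some open interval $(a,b)\subseteq[0,1]$. Let $C=\{y\in\R:f'(y)=0\}$ be the critical set of $f$. Since the derivative $f'(x)=\sum_i a_i\beta_i e^{\beta_i x}$ is again exponential-like, Lemma \ref{expLem} gives $|C|<\infty$. Because $F$ is non-constant on each subinterval, every fiber $F^{-1}(\{y\})$ is closed with empty interior, hence nowhere dense; thus $F^{-1}(C)\cap(a,b)$ is a finite union of nowhere dense sets, and by the Baire category theorem there exists $x_0\in(a,b)\setminus F^{-1}(C)$. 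At this point $f'(F(x_0))\neq 0$, so by the analytic inverse function theorem $f$ admits an analytic local inverse $f^{-1}$ on a neighborhood of $F(x_0)$. Combined with the standing assumption that $f\circ F$ is analytic on $(a,b)$, the identity
\[
F(x)=f^{-1}\bigl((f\circ F)(x)\bigr)
\]
valid in a neighborhood of $x_0$ shows that $F$ is analytic at $x_0$, contradicting the choice of $F$.

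With these three properties verified, Theorem \ref{method BBF} yields that $\{\exp\circ(rF):r\in H\}$ is a set of free generators of an algebra of smooth nowhere analytic functions, where $H\subseteq\R$ is any $\Q$-linearly independent set of cardinality $\Co$. The only delicate ingredient is the initial choice of $F$; I expect the easiest route is to cite a standard construction rather than recompute derivative bounds, since the properties we actually use are only smoothness and nowhere analyticity (the latter forcing non-constancy on every subinterval), and these are well documented.
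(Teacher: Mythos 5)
Your proposal is correct and follows essentially the same route as the paper: argue by contradiction that $f\circ F$ is analytic on some interval, locally invert $f$ there, and conclude that $F=f^{-1}\circ(f\circ F)$ would be analytic as a composition of analytic functions. Your version is somewhat more careful than the paper's (which simply invokes Lemma \ref{expLem} to get an open set on which $f$ is invertible), since you explicitly locate a point where $f'\circ F$ does not vanish via the finiteness of the critical set and the nowhere-density of the fibers of $F$, but the underlying idea is identical.
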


\begin{proof}
Let $F:\R\to\R$ be smooth and nowhere analytic (cf. \cite{GonzAN}) and $f:\R\to\R$ be an exponential-like function. Suppose, on the contrary, that $g=f\circ F$ is analytic at a point $x_0\in\R.$ Then there is an open set $V\ni x_0$ such that $g$ is analytic on $V$ (cf. \cite{Anal}). By Lemma \ref{expLem} there is an open set $U\subseteq F(V)$ on which $f$ is invertible. Hence, $F=f^{-1}\circ g$ is analytic (as a composition of analytic functions, see \cite{Anal}) on the set $F^{-1}(U)\cap V,$ that yields us to a contradiction. By an application of Theorem \ref{method BBF}, the proof is finished.
\end{proof}

\subsection{Nowhere monotone and Bruckner-Garg functions}

We will say, in the sequel, that a subset of $[0,1]$ is a Cantor set, provided it is homeomorphic to the ternary Cantor set.
Let us move our considerations to the set of continuous functions $f:[0,1]\to\R$ and let us introduce the following notion.

\begin{definition}
We say that a continuous function $f:[0,1]\to\R$ is:
\begin{itemize}
\item differentiable nowhere monotone (shortly $f\in \mathcal{DNM}$), provided $f$ is differentiable and is not monotone on any interval $[a,b]\subseteq[0,1];$
\item Bruckner-Garg of rank $k\in\N$ (shortly $f\in \mathcal{BG}_k$), provided there exists a countable set $A\subseteq (\min f, \max f)$ with the property that for all $x\in A$ the preimage $f^{-1}(\{x\})$ is an union of a Cantor set with at most $k$ many isolated points, and for all $x\in(\min f,\max f)\setminus A$ the preimage $f^{-1}(\{x\})$ is a Cantor set;
\item Bruckner-Garg (shortly $f\in\mathcal{BG}_\omega$), provided it is Bruckner-Garg of rank $k$ for some $k\in\N.$
\end{itemize}
\end{definition}

Algebrability of differentiable nowhere monotone functions was considered in \cite{2}, where $\omega$-lineability of $\mathcal{DNM}$ was established. This result was strengthened in \cite{8} to $\Co$-lineability. Here, using exponential-like functions, we may obtain the following.

\begin{theorem}
The set $\mathcal{DNM}$ is strongly $\Co$-algebrable.
\end{theorem}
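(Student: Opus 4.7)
The plan is to invoke Theorem \ref{method BBF}. As the base witness I would take any classically constructed $F:[0,1]\to\R$ that is differentiable and nowhere monotone (such Pompeiu-type functions are exactly the ones used in \cite{2} to establish $\omega$-lineability of $\mathcal{DNM}$). Given an arbitrary exponential-like function $f:\R\to\R$, the composition $f\circ F$ is automatically differentiable on $[0,1]$ since $f$ is smooth, so the only nontrivial task is to verify that $f\circ F$ remains nowhere monotone. Once this is done, $f\circ F\in\mathcal{DNM}\setminus\{0\}$ (the zero function is excluded because a nowhere monotone function is nonconstant), and Theorem \ref{method BBF} delivers the strong $\Co$-algebrability.

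For the nowhere-monotonicity, I would argue by contradiction. Suppose $f\circ F$ is monotone on some subinterval $[a,b]\subseteq[0,1]$. Since $F$ is nowhere monotone it is in particular nonconstant on $[a,b]$, so $F([a,b])=[c,d]$ with $c<d$. By Lemma \ref{expLem}, $\R$ decomposes into finitely many intervals on each of which $f$ is strictly monotone; hence at least one of them meets $(c,d)$ in a nondegenerate open subinterval $J$ on which $f|_J$ is a strictly monotone continuous injection. A standard connectedness argument shows that every value in $(c,d)$ is attained by $F$ at an interior point of $[a,b]$, so $F^{-1}(J)\cap(a,b)$ is open and nonempty, and therefore contains a subinterval $(a',b')$ on which $F$ takes values only in $J$. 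On $(a',b')$ we then have $F=(f|_J)^{-1}\circ(f\circ F)$; since both $f\circ F$ and $(f|_J)^{-1}$ are monotone, $F$ would be monotone on $(a',b')$, contradicting $F\in\mathcal{DNM}$.

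The main obstacle is the step of producing the subinterval $(a',b')$ on which $F$ stays inside a single monotonicity interval $J$ of $f$; this combines the continuity of $F$, the intermediate value theorem, and the piecewise strict monotonicity of exponential-like functions recorded in Lemma \ref{expLem}. Everything else is routine and parallels the compositional scheme already used in the preceding proofs of Theorems \ref{sSZ}, \ref{ap} and \ref{na}.
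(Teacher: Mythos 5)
Your proposal is correct and follows essentially the same route as the paper's proof: assume $f\circ F$ is monotone on some interval, use Lemma \ref{expLem} to locate a subinterval of the domain on which the values of $F$ lie in a single interval of strict monotonicity of $f$, and then invert $f$ there to conclude that $F$ itself is monotone on that subinterval, a contradiction. You merely spell out more carefully (via continuity, the intermediate value theorem, and openness of $F^{-1}(J)$) the step that the paper compresses into one sentence.
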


\begin{proof}
Let $F:[0,1]\to\R$ be a differentiable nowhere monotone function and let $f:\R\to\R$ be an exponential-like function. Obviously, $f\circ F$ is differentiable. To show that $f\circ F$ is nowhere monotone, suppose there exists an interval $[a,b]\subseteq [0,1]$ such that $f\circ F$ is monotone on it. Then, by Lemma \ref{expLem}, there is an interval $[c,d]\subseteq [a,b]$ such that $f$ is monotone on $[F(c),F(d)].$ But then $F$ is monotone on $[c, d]$ and we reach a contradiction. Hence, an application of Theorem \ref{method BBF} gives us the thesis.
\end{proof}

Bruckner-Garg functions of rank $1$ was investigated in \cite{BrGa}, where it was shown that $\mathcal{BG}_1$ is residual. It is an open question if $\mathcal{BG}_1$ is Haar-null in the sense of Christensen (cf. \cite{Chris}). Here, we present a result in algebrability of $\mathcal{BG}_\omega.$

\begin{theorem}
The set $\mathcal{BG}_\omega$ is strongly $\Co$-algebrable.
\end{theorem}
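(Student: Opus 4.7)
The plan is to apply Theorem \ref{method BBF}: I shall exhibit $F \in \mathcal{BG}_\omega$ such that $f \circ F \in \mathcal{BG}_\omega \setminus \{0\}$ for every exponential-like function $f \colon \R \to \R$.

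For the witness I would choose $F \in \mathcal{BG}_1$ with the additional property that $F$ has a unique minimizer $a$ and a unique maximizer $b$, that is, $F^{-1}(\{\min F\}) = \{a\}$ and $F^{-1}(\{\max F\}) = \{b\}$. Such an $F$ exists because $\mathcal{BG}_1$ is residual in $C[0,1]$ and the set of continuous functions on $[0,1]$ with a unique maximizer and a unique minimizer is also residual, so their intersection is nonempty. Let $A_F \subseteq (\min F, \max F)$ denote the countable set witnessing $F \in \mathcal{BG}_1$.

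Now fix an exponential-like $f$ of rank $m$ and a point $y \in (\min(f \circ F), \max(f \circ F))$. By Lemma \ref{expLem} one can write $f^{-1}(\{y\}) \cap [\min F, \max F] = \{z_1, \dots, z_\ell\}$ with $\ell \leq m$, and then
$$(f \circ F)^{-1}(\{y\}) = \bigcup_{i=1}^\ell F^{-1}(\{z_i\}).$$
Each $F^{-1}(\{z_i\})$ is nonempty by the intermediate value theorem, and the pieces are pairwise disjoint closed subsets of $[0,1]$, hence separated by positive distance. Depending on the location of $z_i$, the piece $F^{-1}(\{z_i\})$ is either a Cantor set (when $z_i \in (\min F, \max F) \setminus A_F$), a Cantor set together with one isolated point (when $z_i \in A_F$), or a singleton (when $z_i \in \{\min F, \max F\}$). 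The positive separation ensures that each isolated point of an individual piece remains isolated in the union, while a disjoint finite union of Cantor pieces is itself a Cantor set by Brouwer's characterization, provided at least one Cantor piece appears.

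The main obstacle is guaranteeing that some $z_i$ genuinely lies in the open interval $(\min F, \max F)$, so that at least one Cantor piece is present. If, on the contrary, $f^{-1}(\{y\}) \cap (\min F, \max F) = \emptyset$, then by continuity and the intermediate value theorem $f$ is strictly greater than $y$ throughout $(\min F, \max F)$ or strictly less than $y$ throughout it, forcing $\min(f \circ F) \geq y$ or $\max(f \circ F) \leq y$ and contradicting the choice of $y$. Consequently, for $y$ outside the countable set $B := f(A_F) \cup \{f(\min F), f(\max F)\}$ the preimage $(f \circ F)^{-1}(\{y\})$ is a Cantor set, while for $y \in B \cap (\min(f \circ F), \max(f \circ F))$ it is a Cantor set together with at most $m$ isolated points. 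Hence $f \circ F \in \mathcal{BG}_m \subseteq \mathcal{BG}_\omega$, and $f \circ F$ is nonconstant (so nonzero) because $F$ is surjective onto $[\min F, \max F]$ and $f$ is nonconstant on any interval by Lemma \ref{expLem}. Theorem \ref{method BBF} then delivers the strong $\Co$-algebrability of $\mathcal{BG}_\omega$.
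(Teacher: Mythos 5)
Your proof is correct and follows essentially the same route as the paper: take a Bruckner--Garg function $F$ of rank $1$, pull level sets of $f\circ F$ back through the finite-to-one map $f$, and note that a finite disjoint union of Cantor pieces together with finitely many isolated points is again of the form required by $\mathcal{BG}_m$, then invoke Theorem \ref{method BBF}. If anything, you are more careful than the paper's own argument, which sets $B=f(A)$ only and passes over the possibility that $f^{-1}(\{y\})$ meets $\{\min F,\max F\}$ (where the definition of $\mathcal{BG}_1$ says nothing about the fibers); your refinements --- choosing $F$ with unique extremizers, enlarging $B$ by $f(\min F)$ and $f(\max F)$, and the intermediate-value argument guaranteeing that at least one genuine Cantor piece occurs --- close that small gap.
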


\begin{proof}
Let $F\in\mathcal{BG}_1\subseteq\mathcal{BG}_\omega$ (cf. \cite{BrGa}) and let $A\subseteq (\min F, \max F)$ be a countable set that exists by the definition of Bruckner-Garg function of rank $1.$ Let $f:\R\to\R$ be an exponential-like function. Obviously $f\circ F$ is continuous. To show that $f\circ F\in \mathcal{BG}_\omega$, consider a countable set $B=f(A).$ Notice that $(f\circ F)^{-1}(\{x\})=F^{-1}(f^{-1}(\{x\})).$ Let $x\in B.$ By Lemma \ref{expLem}, $f^{-1}(\{x\})$ is a finite set with at least one point from $A,$ so $F^{-1}(f^{-1}(\{x\}))$ is a finite union of Cantor sets and finitely many isolated points. Hence it is homeomorphic to the ternary Cantor set with finitely many isolated points. Fix now $x\in (\min (f\circ F), \max (f\circ F))\setminus B,$ then $f^{-1}(\{x\})$ is finite and disjoint with $A$. Therefore, $(f\circ F)^{-1}(\{x\})$ is a finite union of Cantor sets, hence it is homeomorphic to the ternary Cantor set. By the application of Theorem \ref{method BBF}, we are done.
\end{proof}

\section{The Baire functions}

This section will be devoted to the classes of real functions connected with the Baire hierarchy.

Let us recall the following notion.
\begin{definition}
We say that a function $f:\R\to\R$ is
\begin{itemize}
\item of Baire class one (shortly $f\in\mathcal{B}_1$), provided it is a pointwise limit of a sequence of continuous functions;
\item of Baire class $\beta$ (shortly $f\in \mathcal{B}_\beta$), provided it is a pointwise limit of a sequence of functions from $\bigcup_{\alpha<\beta} \B_\alpha$ (where $\beta$ is an ordinal and $1<\beta<\omega_1$);
\item of the class $\mathcal{DB}_1$, provided $f$ is of Baire class one and has the Darboux property.
\end{itemize}
\end{definition}

For any ordinal number $\beta<\omega_1$ we have $\mathcal{AP} \subsetneq \mathcal{DB}_1 \subsetneq \mathcal{B}_1 \subsetneq ... \subseteq \mathcal{B}_\beta$.  
It is a consequence of Lebesgue Theorem, that all of the inclusions connected with Baire hierarchy are strict (cf. \cite{Sri}).
Let us recall the following notion. For any topological space $X$ and any family $\mathcal{F}\subseteq \mathcal{P}(X)$, let $\mathcal{F}_\sigma = \{\bigcup F_n : F_1,F_2,...\in\mathcal{F}\}$ and $\mathcal{F}_\delta = \{\bigcap F_n : F_1,F_2,...\in\mathcal{F}\}.$

Let $\Sigma^0_1(X)$ and $\Pi^0_1(X)$ stand for the family of all open and closed sets in $X$, respectively. By $\Sigma^0_0(X)=\Pi^0_0(X)$ we understand the family of all clopen sets in $X$.
For any ordinal number $1<\beta<\omega_1$ let us define $\Sigma^0_\beta(X)=\Big(\bigcup_{\alpha<\beta}\Pi^0_\alpha(X)\Big)_\sigma$ and $\Pi^0_\beta(X)=\Big(\bigcup_{\alpha<\beta}\Sigma^0_\alpha(X)\Big)_\delta$. 
In the sequel, we consider $X=\R$ with natural topology and we write shortly $\Sigma^0_\beta=\Sigma^0_\beta(\R), \Pi^0_\beta=\Pi^0_\beta(\R),$ for any ordinal number $\beta<\omega_1.$
It is well known that $\bigcup_{\beta<\omega_1} \Sigma^0_\beta=\bigcup_{\beta<\omega_1} \Pi^0_\beta$ is the family of all Borel sets in $\R.$

There is a classical criterion of being a function of a certain Baire class (cf. \cite{Sri}).

\begin{theorem}
Let $F:\R\to\R$ and let $\beta<\omega_1.$ The following conditions are equivalent
\begin{itemize}
\item[$(i)$] $F\in \mathcal{B}_\beta;$
\item[$(ii)$] $F^{-1}(V)\in \Sigma^0_{\beta+1}$ for every open set $V\subseteq \R.$
\end{itemize} 
\end{theorem}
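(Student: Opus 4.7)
The plan is to prove both implications by transfinite induction on $\beta$, using the intermediate characterization: $F \in \B_\beta$ if and only if $\{x : F(x) < a\}$ and $\{x : F(x) > a\}$ lie in $\Sigma^0_{\beta+1}$ for every $a \in \R$. This intermediate form is equivalent to (ii) because every open $V \subseteq \R$ is a countable union of open intervals, and $F^{-1}((a,b)) = \{F > a\} \cap \{F < b\}$, so the two conditions translate into each other under countable unions and finite intersections.

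For (i) $\Rightarrow$ (ii), the base case $\beta = 1$ proceeds as follows. Write $\{F < a\} = \bigcup_m \{F \leq a - 1/m\}$. For $F = \lim_n f_n$ with $f_n$ continuous, the classical identity
\[
\{F \leq a - 1/m\} = \bigcup_N \bigcap_{n \geq N} \{f_n \leq a - 1/(2m)\}
\]
expresses the set on the left as a countable union of closed sets (each $\{f_n \leq a - 1/(2m)\}$ is closed by continuity of $f_n$), placing $\{F < a\}$ in $\Sigma^0_2$. The inductive step repeats this identity for $F = \lim_n f_n$ with $f_n \in \B_{\alpha_n}$ and $\alpha_n < \beta$: by the inductive hypothesis applied to each $f_n$, the set $\{f_n \leq a - 1/(2m)\}$ lies in $\Pi^0_{\alpha_n+1} \subseteq \Pi^0_\beta$ (since $\alpha_n + 1 \leq \beta$), so the countable intersection lies in $\Pi^0_\beta$ and the double union in $\Sigma^0_{\beta+1}$.

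For (ii) $\Rightarrow$ (i), the strategy is to realise $F$ as a pointwise limit of functions of lower Baire class through uniform approximation by simple functions. For each $n$, partition $\R$ into half-open intervals $I_{n,k} = [k/n, (k+1)/n)$ and define $F_n(x) = k/n$ on $F^{-1}(I_{n,k})$; then $F_n \to F$ uniformly, so it suffices to show $F_n \in \B_\beta$. Each level set $F_n^{-1}(\{k/n\})$ is a Borel set at level $\beta+1$ of the hierarchy, and by an auxiliary parallel induction, characteristic functions of such sets are pointwise limits of functions from $\bigcup_{\alpha<\beta}\B_\alpha$. Summing scalar multiples and taking partial-sum pointwise limits realises each $F_n$ in $\B_\beta$, and a final application of closure of $\B_\beta$ under uniform limits yields $F \in \B_\beta$.

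The main obstacle is the parallel induction for characteristic functions, which is what allows us to reconstruct a function of class $\beta$ from its Borel-level preimages; this induction must be carried in tandem with the main theorem, with slightly different arguments in the successor and limit-ordinal cases (a diagonal construction in the former, a straightforward patching in the latter). Since the full theorem is classical (the authors cite \cite{Sri} immediately after the statement), the actual write-up would record the identity above, verify the base case in detail, and invoke the standard joint induction for the rest rather than reproving it.
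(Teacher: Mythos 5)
The paper does not actually prove this statement: it is presented as a classical criterion and dispatched with a citation to Srivastava's \emph{A Course on Borel Sets}, so there is no in-paper argument to compare yours against. Judged as a rendering of the classical Lebesgue--Hausdorff proof, your direction (i) $\Rightarrow$ (ii) is essentially right, with one cosmetic flaw: the displayed ``identity'' $\{F\le a-1/m\}=\bigcup_N\bigcap_{n\ge N}\{f_n\le a-1/(2m)\}$ is not an equality for fixed $m$ (the right side is only squeezed between $\{F\le a-1/m\}$ and $\{F\le a-1/(2m)\}$); the conclusion survives because after taking the union over $m$ both bounds give $\{F<a\}$, but you should state it as a two-sided inclusion.

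The direction (ii) $\Rightarrow$ (i) as you have set it up contains a genuine gap, and it sits exactly at the crux of the theorem. Your level sets $F_n^{-1}(\{k/n\})=F^{-1}([k/n,(k+1)/n))$ are only differences of two $\Sigma^0_{\beta+1}$ sets, hence in general lie in $\Delta^0_{\beta+2}$ and \emph{not} in $\Delta^0_{\beta+1}$; and $\chi_A$ is a pointwise limit of functions from $\bigcup_{\alpha<\beta}\B_\alpha$ precisely when $A\in\Delta^0_{\beta+1}$, not merely when $A$ is ``at level $\beta+1$'' (already for $\beta=1$, $\Q\in\Sigma^0_2$ but $\chi_\Q\notin\B_1$). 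Concretely, let $(q_m)$ enumerate $\Q$ and set $F(q_m)=-1/m$, $F(x)=0$ for $x$ irrational. Then $F\in\B_1$ and $F^{-1}(V)\in\Sigma^0_2$ for every open $V$, yet your approximant for $n=1$ is $F_1=-\chi_\Q$, which is not in $\B_1$. So the half-open partition does not preserve the Baire class, and the ``parallel induction'' you invoke cannot be applied to these sets as they stand. The standard repair is to cover $\R$ by the overlapping \emph{open} preimages $F^{-1}((k/n-1/n,(k+1)/n))\in\Sigma^0_{\beta+1}$ and apply the generalized reduction property of $\Sigma^0_{\beta+1}$ (valid for $\beta+1\ge 2$) to shrink them to a pairwise disjoint cover by $\Delta^0_{\beta+1}$ sets, on which one then defines the countably-valued approximant; this reduction step is the non-trivial ingredient missing from your outline. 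The remaining ingredients you list (closure of $\B_\beta$ under uniform limits, $\chi_A\in\B_\beta$ for $A\in\Delta^0_{\beta+1}$) are correct lemmas of the classical proof.
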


We will prove strong $\Co$-algebrability of all of the differences which appear in the list of inclusions, mentioned at the beginning of the section.

\begin{theorem}
The set $\mathcal{DB}_1\setminus \mathcal{AP}$ is strongly $\Co$-algebrable.
\end{theorem}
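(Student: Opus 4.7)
The plan is to apply Theorem \ref{method BBF} with $F:\R\to\R$ defined by $F(x)=\sin(1/x)$ for $x>0$ and $F(x)=0$ for $x\le 0$. First I verify that $F\in\mathcal{DB}_1\setminus\mathcal{AP}$. Continuity of $F$ off $\{0\}$ together with a standard cut-off approximation (e.g. multiplying $\sin(1/x)$ by a continuous tent function that vanishes on $[-1/n, 1/n]$) shows $F\in\mathcal{B}_1$. Since $F$ is constant on $(-\infty,0]$ and, by density of the oscillation, maps every interval meeting $(0,\infty)$ onto all of $[-1,1]$, the image of $F$ on any interval is connected, giving the Darboux property. Finally, the set $\{x>0:F(x)>1/2\}$ has positive density at $0$ (a routine computation), so $\{x:|F(x)|<1/2\}$ does not have density $1$ at $0$ and $F$ is not approximately continuous there.

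Next I fix an arbitrary exponential-like $f:\R\to\R$ and show that $g:=f\circ F$ lies in $(\mathcal{DB}_1\setminus\mathcal{AP})\setminus\{0\}$. Composing the Baire-class-one approximants of $F$ with the continuous $f$ gives $g\in\mathcal{B}_1$. The Darboux property survives because $F$ Darboux makes $F([a,b])$ an interval for every $[a,b]$, so $f(F([a,b]))$ is an interval too. Also $g\not\equiv 0$: by Lemma \ref{expLem} the set $f^{-1}(\{0\})\cap[-1,1]$ is finite, while the range of $F$ equals $[-1,1]$.

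The hard part will be showing $g\notin\mathcal{AP}$, since a priori a continuous outer function could smooth out the oscillations of $F$ near $0$. Here Lemma \ref{expLem} is used a second time: since $f$ is non-constant on every interval, there exists $y^{\ast}\in(-1,1)\setminus\{0\}$ with $f(y^{\ast})\neq f(0)$, and continuity of $f$ yields an interval $(a,b)\subseteq(-1,1)\setminus\{0\}$ about $y^{\ast}$ and an $\varepsilon>0$ with $|f(y)-f(0)|\geq\varepsilon$ for all $y\in(a,b)$. The remaining task is the change-of-variable estimate $u=1/x$, which converts the positive natural density $d:=\lambda(\{u\in[0,2\pi]:\sin u\in(a,b)\})/(2\pi)>0$ of the $2\pi$-periodic set $\{u>0:\sin u\in(a,b)\}$ into the asymptotic
\[
\lambda(\{x\in(0,h):F(x)\in(a,b)\})\sim d\cdot h\quad\text{as }h\to 0^{+},
\]
so $0$ is not a dispersion point of $\{x:|g(x)-g(0)|\geq\varepsilon\}$ and $g$ fails to be approximately continuous at $0$. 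With all four properties checked, Theorem \ref{method BBF} delivers the strong $\Co$-algebrability of $\mathcal{DB}_1\setminus\mathcal{AP}$.
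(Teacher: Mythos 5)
Your argument is correct and reaches the conclusion by the same overall scheme as the paper --- apply Theorem \ref{method BBF} to a single witness $F$ and use Lemma \ref{expLem} to produce a value in the range of $F$ whose $f$-image is bounded away from $f(0)$ on a set of positive upper density at $0$ --- but with a genuinely different witness function. The paper constructs a bespoke piecewise-linear Darboux Baire-one function whose plateaus at the levels $y_n=\frac{n-1}{n}$ occupy explicit intervals accumulating at $0$, so each level set has upper density at least $\frac{1}{4}$ at $0$ by inspection, and the failure of approximate continuity of $f\circ F$ is immediate once Lemma \ref{expLem} yields an $n$ with $f(y_n)\neq f(0)$. You instead take the classical extension of $\sin(1/x)$ by $0$, which makes membership in $\mathcal{DB}_1\setminus\mathcal{AP}$ textbook-standard, at the price of the change-of-variables asymptotic $\lambda(\{x\in(0,h):F(x)\in(a,b)\})\sim d\,h$; this is true (write the measure as $\int_{A\cap(1/h,\infty)}u^{-2}\,du$ for the $2\pi$-periodic set $A=\{u>0:\sin u\in(a,b)\}$ and integrate by parts against $\lambda(A\cap[0,u])\sim d\,u$), but it is the one step you assert rather than prove, and it is precisely the computation the paper's construction is engineered to avoid. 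All other verifications (Baire class one under composition with a continuous outer function, the Darboux property via connectedness of images, $f\circ F\not\equiv 0$ because $f^{-1}(\{0\})$ is finite while $F$ is onto $[-1,1]$) are sound; the only slip is the phrase ``maps every interval meeting $(0,\infty)$ onto all of $[-1,1]$'', which is literally false for, say, $[1,2]$ --- the correct case split is that intervals inside $(0,\infty)$ have connected image by continuity, intervals inside $(-\infty,0]$ have image $\{0\}$, and intervals containing $0$ together with a point of $(0,\infty)$ have image $[-1,1]$.
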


\begin{proof}
Let $a_n=\frac{1}{2^{2n-2}}$, $b_n=\frac{1}{2^{2n-1}}$, $c_n=\frac{a_n+b_n}{2}$, $d_n=\frac{a_{n+1}+b_n}{2},$ for $n\in \N,$ and let $(y_n)_{n\in\N}$ be a numeration of $\{\frac{n-1}{n} : n\in\N\}$, such that each term occurs infinitely many times. Consider a function $F:[0,1]\to\R$, defined by 
$$F(x)=\begin{cases} 0 \text{, when } x\in\bigcup[d_n,b_n] \cup \{0\} \\ y_n \text{, when } x\in [c_n,a_n] \\ \frac{y_n(x-b_n)}{c_n-b_n} \text{, when } x\in (b_n,c_n) \\ \frac{y_{n+1}(x-d_n)}{a_{n+1}-d_n} \text{, when } x\in(a_{n+1},d_n). \end{cases}$$ Clearly $F$ has the Darboux property. Moreover, it is easy to check that preimage by $F$ of any open set is $\Sigma^0_2$, hence $F\in\mathcal{DB}_1.$ Notice that for any $n\in\N$ we have $\bar{d}^+\big(F^{-1}\big(\{\frac{n-1}{n}\}\big),0\big)\geq \frac{1}{4}$ where $\bar{d}^+(A,x)$ stands for the upper Lebesgue density of a set $A$ at the point $x.$ Hence, $F\in\mathcal{DB}_1\setminus \mathcal{AP}$ . Let $f:[0,1]\to\R$ be an exponential-like function. By Lemma \ref{expLem} there exists $n\in\N$ with $f(\frac{n-1}{n})\neq f(0).$ Notice that then $\bar{d}^+\big((f\circ F)^{-1}\big(f(\frac{n-1}{n})\big),0\big)\geq \frac{1}{4}$. Hence, $f\circ F$ is not approximately continuous. On the other hand, since $f$ is continuous, $f\circ F\in\mathcal{DB}_1$ and an applications of Theorem \ref{method BBF} finishes the proof.
\end{proof}

\begin{theorem}
The set $\mathcal{B}_1\setminus \mathcal{DB}_1$ is strongly $\Co$-algebrable.
\end{theorem}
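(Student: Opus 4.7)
The plan is to invoke Theorem \ref{method BBF}, so the task reduces to exhibiting one function $F \in \mathcal{B}_1 \setminus \mathcal{DB}_1$ such that $f \circ F \in (\mathcal{B}_1 \setminus \mathcal{DB}_1) \setminus \{0\}$ for every exponential-like $f : \R \to \R$. Half of this is free: since each exponential-like $f$ is continuous, composition with any Baire-one $F$ is automatically in $\mathcal{B}_1$. So the real task is to choose $F$ in such a way that the failure of the Darboux property cannot be ``repaired'' by post-composition with any exponential-like function.

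The first step is to pick the simplest step function one can, with infinite, discrete range accumulating at a single point. I would take $F : [0,1] \to \R$ with $F(0) = 0$ and $F(x) = n$ whenever $x \in \bigl(\tfrac{1}{n+1}, \tfrac{1}{n}\bigr]$, $n \in \N$. Then $F$ has countably many points of discontinuity (namely $0$ and each $1/n$), so it lies in $\mathcal{B}_1$; one may either invoke the classical fact that a step function with countably many steps is Baire one, or explicitly write $F$ as the pointwise limit of piecewise linear continuous functions agreeing with $F$ outside shrinking neighbourhoods of the jump points. Since the image of $F$ is $\N \cup \{0\}$, no value in $(0,1)$ is attained, so $F$ fails Darboux on $[0,1]$ and $F \in \mathcal{B}_1 \setminus \mathcal{DB}_1$.

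The crucial verification is that the same failure persists after composing with any exponential-like $f$ of rank $m$. Here $f \circ F$ is constant $f(n)$ on each interval $\bigl(\tfrac{1}{n+1}, \tfrac{1}{n}\bigr]$, so it is again a step function in $\mathcal{B}_1$. By Lemma \ref{expLem}, for each $c \in \R$ the set $\{n \in \N\cup\{0\} : f(n) = c\}$ has at most $m$ elements, hence the sequence $(f(n))_{n\geq 0}$ takes infinitely many distinct values. In particular $f \circ F$ is not identically zero, and there exist $a < b$ with $f(a) \neq f(b)$. On any interval $[x_b, x_a]$ with $x_a \in \bigl(\tfrac{1}{a+1}, \tfrac{1}{a}\bigr]$ and $x_b \in \bigl(\tfrac{1}{b+1}, \tfrac{1}{b}\bigr]$ the image of $f \circ F$ is contained in the finite set $\{f(a), f(a+1), \ldots, f(b)\}$, so one may pick $y$ strictly between $f(a)$ and $f(b)$ and outside this finite set; such a $y$ witnesses failure of the Darboux property for $f \circ F$.

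The main obstacle to anticipate is a design question rather than a calculation: one must not use a function $F$ with finite range, because then, for $m$ large enough, an exponential-like $f$ of rank $m$ can attain a common value at every point of that range, making $f \circ F$ constant and hence Darboux. Using an infinite, discrete range sidesteps this pitfall, and the quantitative control on preimages furnished by Lemma \ref{expLem} guarantees that $f$ still separates infinitely many range values. Once these pieces are in place, Theorem \ref{method BBF} yields the strong $\Co$-algebrability of $\mathcal{B}_1 \setminus \mathcal{DB}_1$ immediately.
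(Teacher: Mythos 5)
Your proposal is correct and follows essentially the same route as the paper: the paper also takes a Baire-one step function $F$ that is constant $\tfrac1n$ on $\bigl(\tfrac{1}{2^n},\tfrac{1}{2^{n-1}}\bigr]$ with $F(0)=0$, and uses Lemma \ref{expLem} to conclude that for an exponential-like $f$ of rank $m$ the image of a suitable subinterval under $f\circ F$ is a finite set with at least two elements, hence not connected. The only differences are cosmetic (your range is $\N\cup\{0\}$ rather than $\{1/n\}\cup\{0\}$), and your closing remark about why a finite range would fail is a sound, if inessential, observation.
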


\begin{proof}
Consider a function $F:[0,1]\to[0,1]$ defined by $$F(x)=\begin{cases} 0 \text{, when } x=0 \\ \frac{1}{n} \text{, when } x\in \left(\frac{1}{2^n},\frac{1}{2^{n-1}}\right].\end{cases}$$ Clearly, $F$ does not have the Darboux property. Moreover, preimage by $F$ of any open set is either empty or is a countable union of intervals, hence is $\Sigma^0_2$ set. So $F\in \mathcal{B}_1 \setminus \mathcal{DB}_1.$ Let $f:[0,1]\to\R$ be an exponential-like function. Then clearly $f \circ F$ is of the Baire class one. Moreover, by Lemma \ref{expLem}, it does not have the Darboux property (indeed, if $f$ is of rank $m\in\N$ then $(f\circ F)\left(\left(\frac{1}{2^{m+1}},1\right]\right)$ is finite and has at least two elements - i.e. is not connected). Hence, by Theorem \ref{method BBF} we are done.
\end{proof}

Let us move now to the Baire classes. 

\begin{theorem}
For any ordinal number $\beta<\omega_1,$ the set of all functions $F\in\mathcal{B}_\beta$ such that for any nonempty open set $U\subseteq \R$ the restriction $F|_U\notin \bigcup_{\alpha<\beta}\mathcal{B}_\alpha$ is strongly $\Co$-algebrable.
\end{theorem}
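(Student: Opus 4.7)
The plan is to invoke Theorem \ref{method BBF}, which reduces the problem to constructing a single witness $F:\R\to\R$ in the set under consideration, together with the verification that $f\circ F$ remains in that set for every exponential-like $f$. We may assume $\beta\ge 1$, since for $\beta=0$ the restriction condition is vacuous.

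\emph{Constructing the witness.} By the classical non-collapsing of the Baire hierarchy, fix a function $h:[0,1]\to[-1,1]$ lying in $\B_\beta\setminus\bigcup_{\alpha<\beta}\B_\alpha$, with $h(0)=h(1)=0$ (the boundary normalisation is free, since modifying $h$ at two points does not change its Baire class). Enumerate a countable base $\{I_n:n\in\N\}$ of bounded open intervals of $\R$, choose pairwise disjoint closed intervals $J_n\subseteq I_n$, and let $\phi_n:J_n\to[0,1]$ denote the increasing affine bijection. Define
\[F_n(x)=\begin{cases}\tfrac{1}{n}h(\phi_n(x)) & \text{if }x\in J_n,\\ 0 & \text{otherwise,}\end{cases}\qquad F=\sum_{n=1}^{\infty}F_n.\]
At every point at most one summand is nonzero, so the series is well defined. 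Each $F_n$ is in $\B_\beta$: for an open $V\subseteq\R$ the set $F_n^{-1}(V)\cap J_n$ lies in $\Sigma^0_{\beta+1}(J_n)\subseteq\Sigma^0_{\beta+1}(\R)$ (here we use $J_n$ closed and $\beta+1\ge 2$), while the complementary part of $F_n^{-1}(V)$ is either $\R\setminus J_n$ or empty. Since $\|F-\sum_{n\le N}F_n\|_\infty\le 1/(N+1)$, the series converges uniformly, and uniform limits preserve the Baire class, so $F\in\B_\beta$. Every nonempty open $U\subseteq\R$ contains some $I_n$ and hence some $J_n$, on which $F$ is an affine rescaling of $h$; if $F|_U$ were in $\B_\alpha$ for some $\alpha<\beta$, then $F|_{J_n}\in\B_\alpha$ would force $h\in\B_\alpha$ via the continuous change of variable $\phi_n$, a contradiction. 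Hence $F$ belongs to the target set.

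\emph{Preservation under composition with $f$.} Continuity of $f$ gives $f\circ F\in\B_\beta$. Fix a nonempty open $U\subseteq\R$. By Lemma \ref{expLem}, $f$ is strictly monotone on some open interval $(-\delta,\delta)$, and its restriction there is a homeomorphism onto an open subinterval of $\R$. Because $U$ contains $J_n$ for infinitely many $n$ and $\|F|_{J_n}\|_\infty\le 1/n$, we may fix $n>1/\delta$ with $J_n\subseteq U$, whence $F(J_n)\subseteq(-\delta,\delta)$. If $(f\circ F)|_U\in\B_\alpha$ for some $\alpha<\beta$, then $(f\circ F)|_{J_n}\in\B_\alpha$, and composing with the continuous inverse of $f|_{(-\delta,\delta)}$ yields $F|_{J_n}=f^{-1}\circ(f\circ F)|_{J_n}\in\B_\alpha$, the same contradiction as before. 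Therefore $f\circ F$ is again in the target set, and Theorem \ref{method BBF} delivers strong $\Co$-algebrability.

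The principal obstacle is securing the function $h$ of exact Baire class $\beta$: existence at every countable level $\beta$ is the classical Lebesgue--Hausdorff non-collapse of the Baire hierarchy, straightforward at successor $\beta$ but requiring a universal-set/diagonal argument at limit $\beta$. The $1/n$ scaling on the $n$th block does all the remaining work: it simultaneously makes $\sum F_n$ uniformly convergent (so $F$ lands in $\B_\beta$ and not merely in $\B_{\beta+1}$) and forces every sufficiently late block $J_n$ into any prescribed monotonicity interval of the ambient $f$, so that $f$ can be inverted locally and the Baire class of $h$ recovered.
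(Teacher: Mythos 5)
Your overall strategy is the paper's: plant a ``locally exact class $\beta$'' object inside every member of a countable base, damp the $n$-th copy by $1/n$ so that the sum stays in $\B_\beta$ and so that late copies land in a monotonicity interval of $f$, then invert $f$ locally. However, there are two genuine gaps in the execution. The first is fatal as written: pairwise disjoint nondegenerate closed intervals $J_n\subseteq I_n$, where $\{I_n\}$ is a countable base of open intervals, do not exist. Indeed, $\operatorname{int}(J_1)$ is a nonempty open set, so it contains some basic interval $I_m$ with $m\neq 1$, and then $J_m\subseteq I_m\subseteq J_1$ contradicts $J_m\cap J_1=\emptyset$. This is precisely why the paper's proof plants pairwise disjoint \emph{Cantor sets} $C_n\subseteq V_n$ (being nowhere dense and closed, finitely many of them can always be avoided inside the next basic set), and why it then has to encode the exact-class-$\beta$ behaviour by characteristic functions of sets $A_{n,k}\subseteq C_n$ lying in $\Pi^0_\beta\setminus\Sigma^0_\beta$, rather than by an affine copy of a real function $h$ on an interval. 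Your construction can be repaired along these lines --- replace $J_n$ by a Cantor set $C_n\subseteq I_n$ and $\frac1n h\circ\phi_n$ by a $[0,\frac1n]$-valued function of exact Baire class $\beta$ on $C_n$ (these exist on every uncountable Polish space), extended by $0$ --- but that is a different construction from the one you wrote down.

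The second gap is the claim that Lemma \ref{expLem} makes $f$ strictly monotone on a two-sided interval $(-\delta,\delta)$. The lemma only provides finitely many monotonicity intervals, and $0$ may well be a turning point: $f(x)=e^{x}+e^{-x}$ is exponential-like of rank $2$ with a strict minimum at $0$. Since your $h$ takes values in $[-1,1]$, the values of $F$ on $J_n$ fill a neighbourhood of $0$ from both sides, so $f$ need not be invertible on $F(J_n)$ and the step $F|_{J_n}=f^{-1}\circ(f\circ F)|_{J_n}$ breaks down (for the symmetric $f$ above, $f\circ F$ only sees $|h|$, whose Baire class can genuinely drop). The repair is to normalise $h\geq 0$, so that $F(J_n)\subseteq[0,\tfrac1n]$, and to use a one-sided interval $(0,\delta)$ avoiding the finitely many turning points; continuity then gives injectivity of $f$ on $[0,\delta)$. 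This is exactly how the paper's witness is arranged: its $F$ takes only the values $0$ and $\tfrac1k>0$, and the inversion of $f$ is performed on $(0,\varepsilon)$, with the relevant preimage isolated as $A_k\cap U\notin\Sigma^0_\beta$. With both repairs in place your argument converges to the paper's; without them it does not go through.
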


\begin{proof}
Let $\beta<\omega_1$ and let $(V_n)_{n\in\N}$ be a basis in $\R.$ Inductively, we define a sequence of pairwise disjoint Cantor sets $(C_n)_{n\in\N}$ such that $C_n\subseteq V_n$ for any $n\in\N.$ Moreover, for any $n\in\N$ one can find a collection of sets $A_{n, k}\subseteq C_n,$ for $k\in\{n,n+1,...\},$ such that $A_{n,k}\in \Pi^0_\beta \setminus \Sigma^0_\beta$ (cf. \cite{Sri}). Let $A_k=A_{1,k}\cup A_{2,k} \cup ... \cup A_{k,k},$ for $k\in\N,$ then clearly $A_k\in\Pi^0_\beta \setminus \Sigma^0_\beta.$ Define a function $F:\R\to\R$ by  $F(x)=\sum_{k=1}^{\infty}\frac{1}{k}\chi_{A_k}(x)$. To see that $F\in\mathcal{B}_\beta$, consider an open set $V\subseteq \R.$ Then $F^{-1}(V)$  is either empty or is a countable union of sets from $\{A_1,A_2,...\}\subseteq \Pi^0_\beta \setminus \Sigma^0_\beta,$ hence $F^{-1}(V)\in \Sigma^0_{\beta+1}.$ Moreover, for any nonempty open set $U\subseteq \R$ there is $k\in\N$ with $V_k\subseteq U$ and there is $h>0$ with $F|_U^{-1}(\frac{1}{k}-h,\frac{1}{k}+h)=F|_U^{-1}(\{\frac{1}{k}\})=A_k \cap U \notin \Sigma^0_\beta,$ hence $F|_U \notin \bigcup_{\alpha<\beta}\mathcal{B}_\alpha.$
Let $f:\R\to\R$ be an exponential-like function. Since $f$ is continuous, $f\circ F\in \mathcal{B}_\beta.$ Let $U\subseteq \R$ be a fixed nonempty open set. By Lemma \ref{expLem} there is $\varepsilon >0$ such that $f|_{(0,\varepsilon)}$ is a homeomorphism. Moreover, there is $k\in\N$ and $h>0$ with $V_k\subseteq U$ and $(\frac{1}{k}-h,\frac{1}{k}+h)\subseteq (0,\varepsilon)\setminus \{\frac{1}{k+1},\frac{1}{k-1}\}.$ Consider an open set $f((\frac{1}{k}-h,\frac{1}{k}+h)),$ we have that $(f\circ F)|_U^{-1}f((\frac{1}{k}-h,\frac{1}{k}+h))=F^{-1}((\frac{1}{k}-h,\frac{1}{k}+h))\cap U = A_k \cap U \notin \Sigma^0_\beta$. Hence, $(f\circ F)|_U\notin \bigcup_{\alpha<\beta}\mathcal{B}_\alpha$ and due to Theorem \ref{method BBF}, the desired set is strongly $\Co$-algebrable.
\end{proof}

Notice that exponential-like functions can be also used in proving algebrability of some subsets of $\R^\N$. Consider the following notion: for a sequence $x\in\ell_\infty,$  (space of all bounded real sequences), let us denote by $\on{LIM}(x)=\{y\in\R : x(n_k)\to y, \text{ for some increasing } (n_k)_{k\in\N}\}$ (i.e. the set of all limit points of $x$). In general, $\on{LIM}(x)$ is a nonempty compact subset of $\R.$ In \cite{6} the authors proved strong $\Co$-algebrability of the set of bounded real sequences $x,$ for which the set $\on{LIM}(x)$ is an union of Cantor set and finitely many isolated points. Here, due to Theorem \ref{method BBF}, we obtain a stronger result.

\begin{theorem}
The set of bounded real sequences $x,$ for which the set $\on{LIM}(x)$ is a Cantor set, is strongly $\Co$-algebrable.
\end{theorem}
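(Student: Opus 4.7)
The plan is to apply Theorem \ref{method BBF} in its natural transposition to $\R^\N$: the proof given for $\R^{[0,1]}$ uses no specific property of the domain and transfers verbatim to subsets of $\ell_\infty$. Accordingly, it suffices to exhibit a single $F\in\ell_\infty$ such that, for every exponential-like $f\colon\R\to\R$, the composition $f\circ F$ is a nonzero bounded sequence whose set of limit points is a Cantor set.

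Fix any Cantor set $K\subseteq\R$ (say, the standard ternary Cantor set) and let $F\colon\N\to K$ enumerate a countable dense subset of $K$. Then $F\in\ell_\infty$ and $\on{LIM}(F)=K$. The first observation is the general identity $\on{LIM}(f\circ F)=f(\on{LIM}(F))=f(K)$ valid for any continuous $f$: the inclusion $\supseteq$ follows directly from continuity, while the reverse inclusion follows by thinning any sequence $f(F(n_k))\to z$ to a sub-subsequence along which $F$ also converges (by boundedness) to some $y$, with $f(y)=z$. Moreover $f\circ F$ is not the zero sequence, since $(f\circ F)(n)=0$ would force $F(n)$ into the finite set $f^{-1}(\{0\})$ (by Lemma \ref{expLem}), whereas $F$ has infinite range.

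The problem thus reduces to proving that $f(K)$ is a Cantor set for every exponential-like $f$. It is automatically a nonempty compact subset of $\R$, so two properties remain. \emph{Empty interior:} Decompose $\R$ into closed intervals $J_1,\ldots,J_N$ on each of which $f$ is strictly monotone (Lemma \ref{expLem}); then each $f|_{J_j}$ is a homeomorphism onto the closed interval $f(J_j)$, so $f(K\cap J_j)$ is compact and nowhere dense in $f(J_j)$ (being homeomorphic to the nowhere dense set $K\cap J_j\subseteq K$), hence nowhere dense in $\R$. Therefore $f(K)=\bigcup_j f(K\cap J_j)$ is a finite union of closed nowhere dense sets, hence nowhere dense. \emph{No isolated points:} If some $y_0=f(x_0)$ with $x_0\in K$ were isolated in $f(K)$, pick an open $V\ni y_0$ with $V\cap f(K)=\{y_0\}$; then $K\cap f^{-1}(V)\subseteq f^{-1}(\{y_0\})$, which is finite by Lemma \ref{expLem}. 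But $f^{-1}(V)$ is an open neighbourhood of $x_0$, and perfection of $K$ forces $K\cap f^{-1}(V)$ to be infinite, a contradiction.

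With these verifications, Theorem \ref{method BBF} applied in $\R^\N$ delivers the desired strong $\Co$-algebrability. The main obstacle is the no-isolated-points step, where one must convert the finiteness supplied by Lemma \ref{expLem} into a direct contradiction with perfection of $K$; the remaining verifications reduce to the piecewise-monotone structure of exponential-like functions.
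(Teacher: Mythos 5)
Your proof is correct, and the overall strategy coincides with the paper's: start from a sequence whose set of limit points is a Cantor set $K$, use $\on{LIM}(f\circ F)=f(\on{LIM}(F))$, and reduce everything to showing that $f(K)$ is again a Cantor set before invoking Theorem \ref{method BBF}. Where you diverge is in how you certify that $f(K)$ is a Cantor set. The paper chooses $K$ to have Lebesgue measure zero and argues measure-theoretically: $f$ is finite-to-one and satisfies Luzin's condition (N), so $f(K)$ is a compact perfect set of measure zero, hence a Cantor set (citing \cite{6} for the finite-to-one image of a Cantor set). You instead argue purely topologically: the piecewise strict monotonicity from Lemma \ref{expLem} makes $f$ a finite union of homeomorphisms, so $f(K)$ is a finite union of closed nowhere dense sets and hence nowhere dense, and you derive perfection directly by playing the finiteness of $f^{-1}(\{y_0\})$ against the perfection of $K$. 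Your route is more self-contained (no appeal to condition (N), no need to normalize $K$ to have measure zero, and no external citation for the image of a perfect set), at the cost of being slightly longer; the paper's route is shorter but leans on the measure-zero choice and a cited fact. Both are complete; your explicit construction of $F$ as an enumeration of a countable dense subset of $K$ also fills in an existence step the paper leaves implicit.
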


\begin{proof}
Let $x=(x(n))_{n\in\N} \in \ell_\infty$ be such that $\on{LIM}(x)$ is a Cantor set of Lebesgue measure zero and let $f:\R\to\R$ be an exponential-like function. We will show that for $f\circ x :\N\to\R,$ the set $LIM(f\circ x)$ is a Cantor set. By continuity of $f,$ it is easy to check that $\on{LIM}(f(x))=f(\on{LIM}(x)).$ Moreover, $f$ is finite-to-one (by Lemma \ref{expLem}) and satisfies the (N) Luzin condition, i.e. it maps Lebesgue null sets to Lebesgue null sets. Hence, the set $f(\on{LIM}(x))$ is a Cantor set (cf. \cite{6}). Now, it is enough to apply Theorem \ref{method BBF}.
\end{proof}

\section{Final remarks and open questions}

This paper shows that the use of compositions with exponential-like functions has a lot of applications in algebrability (both in spaces of functions and of sequences). However, this idea is not \textbf{general} in the sense, that using it we cannot prove strong $\Co$-algebrability always when it is possible. One can consider a set $\mathcal{K}$ of all smooth functions $F:\R\to\R$ with compact supports. Let $F_0:\R\to\R$ be defined by $$F_0(x)=\begin{cases} \exp{\frac{-1}{1-x^2}} \text{, if } |x|<1 ;\\ 0 \text{, elsewhere} \end{cases},$$ then $F_0\in \mathcal{K}.$ Let $H$ be a set of cardinality $\Co$, linearly independent over $\Q.$ It is rather easy to see that $\{F_0(x) \exp{(rx)}:r\in H\}$ generates a free subalgebra of $\mathcal{K}\cup\{0\}.$ Hence $\mathcal{K}$ is strongly $\Co$-algebrable. On the other hand, for an exponential-like function $f(x)=\exp{(x)}$ and any $F\in\mathcal{K},$ we have that $0\neq f\circ F \notin \mathcal{K}$ (since it has a full support, in particular non compact). So Theorem \ref{method BBF} does not work in that case.

On the other hand, despite the fact that almost all of the presented proofs are relatively simple, they essentially improve some known results in algebrability and rise them to the highest possible level.

To show another advantage of the use of compositions with exponential-like functions, let us consider the following notion: for a Banach algebra $E$ we say that a set $\mathcal{A}\subseteq E$ is \emph{densely strongly $\kappa$-algebrable}, provided there exists a free algebra of $\kappa$-generators contained in $\mathcal{A}\cup \{0\}$ which is dense in $E.$ In \cite{BBF}, the authors showed that the set of strongly singular functions is densely strongly $\Co$-algebrable in $C[0,1].$ One can ask, when an algebra of continuous functions, constructed as in the proof of Theorem \ref{method BBF} is dense in $C[0,1].$ The answer to this is that the constructed algebra is dense, if and only if the starting function $F$ is one-to-one. Indeed, if $F$ is one-to-one then by similar argument as in Theorem 10 in \cite{BBF} the algebra is dense. On the other hand, when $F$ is not one-to-one (i.e. it does not separate some points $x$ and $y$) then any function in the algebra (even in its closure) cannot separate $x$ and $y.$ By this observation and the fact, that there exists a strictly monotone nowhere analytic smooth function, we may obtain a dense strong $\Co$-algebrability of the set in Theorem \ref{na}.

Let us remark also, that $F\in \mathcal{F},$ as an assumption in Theorem \ref{method BBF}, can be omitted. For an existence of a free algebra of $\Co$ generators in $\mathcal{F}$, it is enough that $f\circ F\in \mathcal{F}\setminus \{0\},$ for any exponential-like function $f.$ Moreover, the following example shows that a set is strongly $\Co$-algebrable, what can be proved by starting only from a function $F\notin \mathcal{F}.$ Consider a set $\mathcal{EXP}$ of all exponential-like functions $f:\R\to\R.$ Let $F=\on{id}:\R\to\R,$ clearly $F\notin\mathcal{EXP}.$ Obviously, $f\circ F\in \mathcal{EXP},$ for any exponential-like function $f:\R\to\R.$ Hence, by the proof of Theorem \ref{method BBF}, $\mathcal{EXP}$ is strongly $\Co$-algebrable. On the other hand, a starting function $F$ cannot be taken from $\mathcal{EXP},$ since then $f\circ F \notin \mathcal{EXP}.$

At the end, let us state some open questions connected with algebrability and classes of function considered in this paper.

\begin{problem}
Is the set of all functions from $\mathcal{C}_{n}$ which the $n$-th derivative is nowhere H\"older strongly $\Co$-algebrable?
\end{problem}

\begin{problem}
Is the set $\mathcal{BG}_1$ algebrable?
\end{problem}

\end{document}